\newtheorem{theorem}{Theorem}[section]
\newtheorem{lemma}[theorem]{Lemma}
\newtheorem{claim}[theorem]{Claim}
\newtheorem{proposition}[theorem]{Proposition}
\theoremstyle{definition}
\theoremstyle{remark}
\newtheorem{remark}[theorem]{Remark}
\DeclareMathOperator{\Vol}{Vol} \DeclareMathOperator{\dist}{dist}
\DeclareMathOperator{\Hess}{Hess} 
\DeclareMathOperator{\diam}{diam} 
\DeclareMathOperator{\V}{V}
\author{Gang Tian}
\thanks{The first author is supported in part by National Science Foundation grants DMS-0847524 and DMS-0804095.}
\address{School of Mathematics\\
Peking University, Beijing, China}
\address{Department of Mathematics\\
Princeton University, Princeton NJ 08544}
\email{tian@math.princeton.edu}
\author{Zhenlei Zhang}
\thanks{The second author is supported by National Science Foundation grant of China 09221010056}
\address{School of Mathematics\\
Capital Normal University, Beijing, 100048, China}
\address{Department of Mathematics\\
Princeton University, Princeton NJ 08544}
\email{zhleigo@yahoo.com.cn}
\begin{document}

\title{Degeneration of K\"ahler-Ricci solitons}

\begin{abstract}
Let $(Y, d)$ be a Gromov¨CHausdorff limit of $n$-dimensional closed
shrinking K\"ahler-Ricci solitons with uniformly bounded volumes and
Futaki invariants. We prove that off a closed subset of codimension
at least 4, Y is a smooth manifold satisfying a shrinking
K\"ahler-Ricci soliton equation. A similar convergence result for
K\"ahler-Ricci flow of positive first Chern class is also obtained.
\end{abstract}

\maketitle

\section{Introduction}

The degeneration of manifolds with bounded Ricci curvature has been
extensively studied, cf. \cite{ChCo1, ChCo2, ChCo3, ChCoTi, Ch1,
ChTi}, or see \cite{Ch2} for a survey of related results.

Let $(Y,d)$ be a Gromov-Hausdorff limit of a sequence of manifolds
$(M_i,g_i)$ whose Ricci curvature is bounded uniformly from below.
In the noncollapsing case, in \cite{ChCo1} the authors proved that
the singular set of $Y$ is of codimension at least $2$; if the Ricci
curvature admits a two-sided uniform bound, the singular set is
closed and, combining with the result of Anderson \cite{An2}, the
regular set is a $C^{1,\alpha}$ Riemannian manifold. The cases when
$M_i$ has special holonomy or $L^p$ bounded curvature were
considered in \cite{ChCoTi}, \cite{Ch1} and \cite{ChTi}. In
\cite{ChCo1}-\cite{ChCo3}, same structure theorems were proved even
for the collapsing case.

When $(M_i,g_i)$ is Einstein, the metric is smooth and Einstein on
the regular part of $Y$ and the convergence takes place smoothly
there. For shrinking Ricci solitons, under certain curvature
conditions, the degeneration property has been studied in
\cite{CaSe, ZhX, We, WaCh, Zh1}. These results gave generalizations
of orbifold compactness theorem of Einstein manifolds \cite{An1,
BaKaNa, Ti1}. Recently, in \cite{Zh2}, the second-named author
observed that the curvature condition is somehow unnecessary: if
$M_i$ are shrinking Ricci solitons without any curvature assumption
in a prior, then $Y$ has closed singular set whose codimension is at
least $2$ and the regular part is a smooth manifold satisfying the
shrinking Ricci soliton equation.

In this short note, we will consider the shrinking K\"ahler-Ricci
solitons. Here a shrinking K\"ahler-Ricci soliton means a K\"ahler
manifold $(M,g)$ which satisfies
\begin{equation}\label{e1}
\left\{ \begin{array}{ll}
R_{i\bar{j}}+\nabla_i\nabla_{\bar{j}}u=g_{i\bar{j}},\\
\nabla_i\nabla_ju=\nabla_{\bar{i}}\nabla_{\bar{j}}u=0,
\end{array} \right.
\end{equation}
for some smooth function $u$. Associated to the shrinking
K\"ahler-Ricci soliton, the Futaki invariant, evaluated at $\nabla
u$, is given by $\int_M|\nabla u|^2dv$.

We are going to prove the following theorem.

\begin{theorem}\label{t1}
Let $(M_i,g_i)$ be a sequence of $n$-dimensional compact shrinking
K\"ahler-Ricci solitons such that $c_1(M_i)^n\leq C$ and the Futaki
invariant $\leq C$ for a uniform constant $C$. Then, by taking a
subsequence if necessary,
$(M_i,g_i)\stackrel{d_{GH}}{\longrightarrow}(Y,d)$, where $(Y,d)$ is
a path metric space with a closed singular set $\mathcal{S}$ of
codimension at least 4. On the regular set $Y\backslash\mathcal{S}$,
$d$ is induced by a smooth K\"ahler metric which satisfies a
K\"ahler-Ricci soliton equation. Furthermore, the convergence takes
place smoothly on $Y\backslash\mathcal{S}$.
\end{theorem}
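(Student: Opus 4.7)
The plan is to follow the framework of \cite{Zh2}, which handles general shrinking Ricci solitons and gives a codimension-two singular set, and then to use the K\"ahler hypothesis decisively to upgrade the codimension bound from $2$ to $4$. First I would install the uniform a priori estimates needed to run the Cheeger--Colding machinery. Because $[\omega_i]=2\pi c_1(M_i)$ for a K\"ahler--Ricci soliton on a Fano manifold, the bound $c_1(M_i)^n\leq C$ gives a uniform volume bound. Tracing (\ref{e1}) yields $R_i+\Delta u_i=n$, and Perelman's $\mathcal{W}$-functional monotonicity, applied to the Ricci flow generated by the soliton vector field, supplies a uniform upper bound for the scalar curvature, a uniform diameter bound, a $\kappa$-noncollapsing constant, and a normalization of the potential under which $\|u_i\|_{C^0}+\|\nabla u_i\|_{C^0}$ is uniformly bounded via Perelman's identity $R_i+|\nabla u_i|^2-2u_i=\mathrm{const}$; the Futaki hypothesis reinforces the $L^2$ control on $\nabla u_i$.

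With these inputs, I would invoke the arguments of \cite{Zh2} to extract a Gromov--Hausdorff subsequential limit $(Y,d)=\mathcal{R}\sqcup\mathcal{S}'$, where $\mathcal{S}'$ is closed of Hausdorff codimension at least $2$, $\mathcal{R}$ is a smooth K\"ahler manifold carrying a potential $u_\infty$ that solves (\ref{e1}), and the convergence is smooth on $\mathcal{R}$. The technical backbone here is the Cheeger--Colding almost metric cone theorem combined with an $\varepsilon$-regularity statement for the K\"ahler--Ricci soliton system, which follows from Moser iteration on the Bochner-type elliptic equation satisfied by the curvature; this remains tractable because (\ref{e1}) writes $\Ric=g-\Hess u$, so the elliptic regularity for $u$ feeds directly into regularity for $\Ric$ even though $\Ric$ is not pointwise bounded.

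The heart of the proof, and the main obstacle, is the refinement of $\dim_H\mathcal{S}'$ from $2n-2$ down to $2n-4$. For this I would follow the Cheeger--Colding--Tian strategy developed for K\"ahler--Einstein limits. Two ingredients are required: (i) a local $L^2$ bound $\int_{B_r(x_i)}|\mathrm{Rm}_{g_i}|^2\,dv_i\leq C(r)$ on the full curvature tensor; and (ii) the rigidity that every tangent cone at a point of $\mathcal{S}'$ is a K\"ahler metric cone satisfying the rescaled, hence Ricci-flat, soliton equation. For (i) I would combine the Chern--Weil expression for $\int c_2(M_i)\wedge\omega_i^{n-2}$, which is a fixed linear combination of $\int|\mathrm{Rm}|^2$, $\int|\Ric|^2$ and $\int R^2$, with the topological bound $c_1(M_i)^n\leq C$, the scalar bound from the first step, and the pointwise identity $|\Ric|^2+2\langle\Ric,\Hess u\rangle+|\Hess u|^2=n$ obtained by squaring (\ref{e1}), together with integration by parts powered by the $C^0$ bound on $u_i$. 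For (ii) the K\"ahler structure passes to tangent cones by the smooth convergence on $\mathcal{R}$, and a Ricci-flat K\"ahler cone that isometrically splits off $\mathbb{R}^{2n-2}$ must be flat $\mathbb{C}^n$, because its transverse two-real-dimensional Ricci-flat K\"ahler cone is forced to be the flat plane. Hence no tangent cone at any $x\in\mathcal{S}'$ splits an $\mathbb{R}^{2n-2}$ factor, and Cheeger's volume-slicing dimension reduction forces $\dim_H\mathcal{S}'\leq 2n-4$. The soliton equation (\ref{e1}) then passes to the smooth limit on $\mathcal{R}$, completing the argument.
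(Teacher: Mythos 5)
Your first two stages (uniform volume from $c_1^n$, a diameter bound via Perelman's entropy --- note that the paper needs the Futaki bound precisely here, through Lemma \ref{l22}, to bound $\mu(g,\tfrac12)$ from below and hence invoke Lemma \ref{l21} --- and then the machinery of \cite{Zh2} giving a limit with closed singular set of codimension $\geq 2$ and smooth convergence on the regular part) agree with the paper. The gap is in the codimension-$4$ upgrade, and it affects both of your ingredients. For (i): the Chern--Weil identity expresses $\int_M|\mathrm{Rm}|^2$ in terms of $c_2(M)\cdot c_1(M)^{n-2}$ together with $\int|\Ric|^2$ and $\int R^2$, but $c_2\cdot c_1^{n-2}$ is \emph{not} controlled by the hypothesis $c_1^n\leq C$; no such bound is assumed, and one of the remarks after Theorem \ref{t1} makes clear that a uniform $L^2$ curvature bound is an \emph{additional} hypothesis (under which one gets the stronger conclusion $\mathcal{H}^{2n-4}(\mathcal{S})<\infty$). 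The whole point of the paper's argument is to avoid the $L^2$ curvature bound. For (ii): the rigidity claim is false as stated. The two-dimensional cone $C(S_t)$ with cone angle $t<2\pi$ is flat (hence Ricci-flat) and K\"ahler away from its vertex, so ``Ricci-flat K\"ahler cone splitting off $\mathbb{R}^{2n-2}$'' does not force flat $\mathbb{C}^n$; ruling out $\mathbb{R}^{2n-2}\times C(S_t)$ with $t<2\pi$ is exactly the hard step and cannot be done by metric rigidity alone --- it needs a topological integrality input.

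The paper supplies that input in Proposition \ref{p31} by a slice/transgression argument: on a good two-dimensional slice $\Sigma_{z,r}$ transverse to the almost-$\mathbb{R}^{2n-2}$ factor, the integral of the Chern form of $\det TM$ equals $\frac{\sqrt{-1}}{2\pi}\int_{\Sigma_{z,r}}\rho^2 g_{p\bar q}\,dz^p\wedge d\bar z^q\to 0$ by the soliton equation, while Stokes' theorem applied to the transgressed connection form on the unit circle bundle shows the same integral equals an integer plus approximately $-\,t/(2\pi)$; hence $t/(2\pi)\in\mathbb{Z}$ and $t=2\pi$. Finally, even granting that no tangent cone splits exactly $\mathbb{R}^{2n-2}$, you would only conclude $\dim\mathcal{S}\leq 2n-3$: you must also empty the stratum $\mathcal{S}_{2n-3}\setminus\mathcal{S}_{2n-4}$. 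The paper does this by showing that any almost-splitting direction $v$ yields a second, independent one $Jv$ (using the conformal metrics $\tilde g_i$ with bounded Ricci curvature and Claim \ref{c32}, which gives $\|\tilde\nabla-\nabla\|\to 0$ after rescaling), so Euclidean factors of tangent cones are even-dimensional. Your proposal does not address this odd stratum at all.
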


We give some remarks about our theorem.

\begin{remark}
The hypothesis in the theorem implies a uniform upper bound on the
diameter of $M_i$. In general, the diameter is difficult to control;
while on the other hand, the Futaki invariant is much easier to
handle, since it is defined on the finite dimensional vector space
generated by holomorphic vector fields. We also remark that the
upper bound on Futaki invariant can be replaced by an upper bound of
$F_{-\nabla u_i}(-\nabla u_i)=\int_{M_i}|\nabla u_i|^2e^{u_i}dv$,
the modified Futaki invariant defined in \cite{TiZh}.
\end{remark}

\begin{remark}
Due to a theorem from algebraic geometry, given $n$, there exist
only finitely many families of $M$ with $c_1(M)>0$. This should
imply that there is an upper bound on both $c_1(M)^n$ and the Futaki
invariant. Hence, the conditions in the theorem hold automatically.
\end{remark}

\begin{remark}
If the curvature of $M_i$ admits a uniform $L^2$ bound, then,
following the proof of Theorem 1.23 in \cite{ChCoTi}, one can also
show that the $(2n-4)$-dimensional Hausdorff measure
$\mathcal{H}^{2n-4}(\mathcal{S})<\infty$.
\end{remark}

In \S 2, we recall and prove some preliminaries about shrinking
Ricci solitons, then in \S 3, we provide a proof of the theorem. In
the last section \S 4, we prove a theorem on the degeneration of a
K\"ahler-Ricci flow $(M,g(t))$ on a compact manifold with positive
first Chern class. In the course of the proof in \S 4, we show the
$L^\infty$ estimate for the minimizer of $\mu(g(t),\frac{1}{2})$.

\section{Preliminaries about shrinking Ricci solitons}

We recall and prove some basic estimates about compact shrinking
K\"ahler-Ricci solitons in this section. Let $(M,g)$ be a compact
shrinking K\"ahler-Ricci soliton with potential function $u$. We
suppose throughout this note that $u$ is normalized such that
\begin{equation}\label{e23}
\int_M(2\pi)^{n}e^{-u}dv=1.
\end{equation}

\subsection{Some basic results}

It's well-known that the following identities
\begin{eqnarray}
&R+\triangle u=n,\label{e24}\\
&R+|\nabla u|^2=u-\bar{u}+n\label{e25}
\end{eqnarray}
hold, where $R$ denotes the scalar curvature of $g$ and
$\bar{u}=\int_Mu(2\pi)^{-n}e^{-u}dv$. Obviously by definition $\inf
u\leq\bar{u}$; while on the other hand, by a result of Ivey
\cite{Iv}, $R>0$, thus equation (\ref{e25}) implies that $\inf
u>\bar{u}-n$. Thus,
\begin{equation}\label{e26}
\bar{u}-n<\inf u\leq\bar{u},
\end{equation}
\begin{equation}\label{e26.5}
 \int_M(-\triangle u+|\nabla u|^2)e^{-u}dv=\int_M(u-\bar{u})e^{-u}dv=0.
\end{equation}

\subsection{Perelman's entropy functional}

Recall that Perelman's entropy functional for a closed K\"ahler manifold $(M,g)$ is defined by \cite{Pe}
\begin{equation}\label{e27}
\mathcal{W}(g,f,\tau)=\int_M\big(2\tau(R+|\nabla f|^2)+f-2n\big)(4\pi\tau)^{-n}e^{-f}dv,
\end{equation}
where $f\in C^\infty$ and $\tau>0$ is any constant. Then define the $\mu$ functional via
\begin{equation}\label{e28}
\mu(g,\tau)=\inf_f\{\mathcal{W}(g,f,\tau)|\int_M(4\pi\tau)^{-n}e^{-f}dv=1\}.
\end{equation}
We remark that, according to Perelman's monotonicity theorem about
$\mathcal{W}$ along the Ricci flow, if $(M,g)$ is a shrinking
K\"ahler-Ricci soliton with potential function $u$, then
\begin{equation}\label{e29}
\mu(g,\frac{1}{2})=\int_M(R+|\nabla u|^2+u-2n)(2\pi)^{-n}e^{-u}dv.
\end{equation}
Applying formulas (\ref{e24}) and (\ref{e26.5}), we can rewrite (\ref{e29}) as
\begin{equation}\label{e210}
\mu(g,\frac{1}{2})=\bar{u}-n.
\end{equation}

The entropy relates the local collapsing information of a shrinking
Ricci soliton. More precisely, by an argument as in \cite{SeTi}, we
have the following lemma.

\begin{lemma}\label{l21}
There exist a function $D=D(A,V,n)$ for any $A,V>0$ and integer $n$
satisfying the following. Let $(M,g)$ be an $n$-dimensional
shrinking K\"ahler-Ricci soliton such that
$\mu(g,\frac{1}{2})\geq-A$ and $\Vol(M)\leq V$, then its diameter
$\diam(M)\leq D$.
\end{lemma}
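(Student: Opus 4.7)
The plan is to combine a uniform non-collapsing estimate, extracted from the entropy bound, with the volume upper bound via a standard packing argument.

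First, from $\mu(g, 1/2) \geq -A$, I would derive a uniform non-collapsing: there exist $\kappa = \kappa(A,n) > 0$ and $r_0 = r_0(A,n) > 0$ such that $\Vol(B(x,r)) \geq \kappa r^{2n}$ for every $x \in M$ and every $r \in (0, r_0]$. The route is Perelman's logarithmic Sobolev inequality encoded in the bound on $\mathcal{W}$. For a general Riemannian manifold this step normally requires an additional curvature hypothesis on the ball, but the soliton structure helps on two counts: Ivey's theorem ensures $R > 0$, so the scalar curvature term in $\mathcal{W}$ carries a useful sign; and the self-similar Ricci flow $g(t) = (1-2t)\phi_t^* g$ realizing the soliton satisfies $\mu(g(t), 1/2 - t) = \mu(g, 1/2) \geq -A$, so the entropy bound propagates to all smaller scales. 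This is the portion of the argument taken from \cite{SeTi}.

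Next I would run the packing argument. Choose a maximal $r_0$-separated subset $\{x_1, \dots, x_N\} \subset M$. The balls $B(x_i, r_0/2)$ are pairwise disjoint, and non-collapsing gives
\[
V \;\geq\; \sum_{i=1}^N \Vol(B(x_i, r_0/2)) \;\geq\; N \kappa (r_0/2)^{2n},
\]
which bounds $N$ in terms of $A$, $V$, and $n$. By maximality, the balls $B(x_i, r_0)$ cover $M$, so the combinatorial intersection graph on $\{x_i\}$ (edges when $d(x_i,x_j) \leq 2r_0$) is connected, since $M$ is. Chaining along this graph bounds the distance between any two points of $M$ by $(N+1) \cdot 2r_0$, yielding $\diam(M,g) \leq D(A,V,n)$ as desired.

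The main obstacle is the non-collapsing step: producing a pointwise volume lower bound from the single entropy bound $\mu(g,1/2) \geq -A$ without any a priori curvature control. This is precisely where the soliton structure, Ivey's positivity, and the self-similarity of $g$ under Ricci flow come together, and this step encapsulates the content of \cite{SeTi} that the lemma invokes.
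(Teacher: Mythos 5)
The packing-and-chaining half of your argument is fine, but the non-collapsing step on which everything rests does not follow from the tools you invoke, and the two ``soliton inputs'' you cite do not close the gap. In Perelman's no-local-collapsing argument one plugs a test function concentrated on $B(x,2r)$ into $\mathcal{W}(g,\cdot,\tau)$ and needs an \emph{upper} bound on the result in order to contradict $\mu\geq-A$ when $\Vol(B(x,r))$ is small; the term $2\tau\int R\,(4\pi\tau)^{-n}e^{-f}dv$ must therefore be bounded \emph{from above}, which is exactly why the standard statement assumes $R\leq r^{-2}$ (or $|Rm|\leq r^{-2}$) on the ball. Ivey's positivity $R>0$ gives this term the \emph{wrong} sign for your purposes: it is a nonnegative contribution that you cannot discard, and without an a priori upper bound on $R$ (the $C^0$ bounds (\ref{C1 bound}) are derived only \emph{after} the diameter is bounded, so invoking them here would be circular) a collapsed ball with large scalar curvature produces no contradiction. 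The self-similarity observation only addresses the scale of $\tau$ (and even there, the identity $\mu(g(t),\tfrac12-t)=\mu(g,\tfrac12)$ along the forward soliton flow is just constancy of the monotone quantity; propagating the bound to smaller $\tau$ uses that the soliton flow is ancient). Neither input supplies the missing curvature control, so a pointwise volume lower bound at every $x\in M$ is not available, and the chain of balls cannot be constructed.

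The argument the paper refers to (Perelman's diameter bound as written up in \cite{SeTi}) avoids pointwise non-collapsing altogether. One fixes $x_0$, considers dyadic annuli $A_k=B(x_0,2^{k+1})\setminus B(x_0,2^k)$, and uses the two global facts available here without any diameter input: $\Vol(M)\leq V$ and $\int_M R\,dv=n\Vol(M)\leq nV$ (from $R+\triangle u=n$). If $\diam(M)$ were too large there would be many annuli, and a pigeonhole selects a block of annuli whose volume and whose integral of $R$ are both small and do not jump when passing to the neighboring annuli; the test function equal to $1$ on that block and cut off over one dyadic scale on each side then makes $\mathcal{W}(g,f,\tfrac12)$ arbitrarily negative (the $\int f$ term dominates, the gradient and scalar-curvature terms being controlled by the selection), contradicting $\mu(g,\tfrac12)\geq-A$. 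If you want to keep your outline, you must either supply a genuine curvature-free non-collapsing statement (none is known in this generality) or replace that step by the annulus/pigeonhole argument.
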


\subsection{Bound $\mu$ in terms of Futaki invariant}

When $(M,g)$ is a shrinking K\"ahler-Ricci soliton, the Futaki invariant, evaluated at the gradient vector field of $u$, is given by
\begin{equation}\label{e211}
\mathrm{F}=\mathrm{F}(\nabla u)=\int_M|\nabla u|^2dv.
\end{equation}

We will prove the following lemma.

\begin{lemma}\label{l22}
There exists a positive constant $c=c(n)$ such that for any compact shrinking K\"ahler-Ricci soliton $(M,g)$, we have
\begin{equation}\label{e212}
\mu(g,\frac{1}{2})\geq-c(1+\mathrm{F}).
\end{equation}
\end{lemma}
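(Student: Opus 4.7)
Combining \eqref{e210} with the target inequality, the goal reduces to showing $\bar u\geq n-c(n)(1+\mathrm F)$. My plan is to compare the weighted average $\bar u$ with the unweighted Riemannian average $\hat u:=\Vol(M)^{-1}\int_M u\,dv$, use Jensen's inequality to bound $\hat u$ from below in terms of $\Vol(M)$, and then invoke the cohomological identity $[\omega_g]=2\pi c_1(M)$ to obtain a universal lower bound on $\Vol(M)$ depending only on $n$.

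First I would integrate \eqref{e24} to get $\int_M R\,dv=n\Vol(M)$, and then substitute this into the integrated form of \eqref{e25} to obtain the key identity $\int_M u\,dv=\bar u\,\Vol(M)+\mathrm F$, equivalently $\hat u-\bar u=\mathrm F/\Vol(M)$. For the second ingredient I would apply Jensen's inequality to the convex function $x\mapsto e^{-x}$ with respect to the probability measure $dv/\Vol(M)$; using the normalization \eqref{e23} this yields $e^{-\hat u}\leq(2\pi)^n/\Vol(M)$, hence $\hat u\geq\log\Vol(M)-n\log(2\pi)$. Combining the two steps,
\[
\bar u\geq\log\Vol(M)-n\log(2\pi)-\mathrm F/\Vol(M).
\]

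The final step is a universal lower bound on $\Vol(M)$. Taking cohomology classes in the soliton equation \eqref{e1} gives $[\omega_g]=2\pi c_1(M)$, hence $\Vol(M)=(2\pi)^n c_1(M)^n/n!$. Since $M$ is a smooth Fano manifold, $c_1(M)^n\in\mathbb Z_{>0}$, so $\Vol(M)\geq(2\pi)^n/n!$. Substituting this bound yields $\bar u\geq-\log(n!)-\mathrm F\cdot n!/(2\pi)^n$, from which $\mu(g,\tfrac12)=\bar u-n\geq-c(n)(1+\mathrm F)$ for a suitable dimensional constant. The only real subtlety I anticipate is the last step: the other ingredients are routine integration by parts and a textbook Jensen inequality, whereas the volume lower bound rests on the topological integrality $c_1(M)^n\in\mathbb Z_{>0}$ rather than on soliton analysis proper.
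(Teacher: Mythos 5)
Your proof is correct, and it takes a genuinely different route through the key estimate. Both arguments share the same skeleton: reduce via \eqref{e210} to a lower bound on $\bar u$, extract the identity $\mathrm F=\int_M(u-\bar u)\,dv$ by integrating \eqref{e24} and \eqref{e25}, and invoke a dimensional lower bound on $\Vol(M)$. But where you then apply Jensen's inequality to $x\mapsto e^{-x}$ against the probability measure $dv/\Vol(M)$ to get $\hat u\geq\log\Vol(M)-n\log(2\pi)$ and conclude in one line from $\bar u=\hat u-\mathrm F/\Vol(M)$, the paper instead runs a level-set decomposition: it estimates $\Vol\{u\leq\bar u/2\}$ from the normalization \eqref{e23}, splits $\int_M(u-\bar u)\,dv$ over $\{u\geq\bar u/2\}$ and $\{u\leq\bar u/2\}$, and crucially uses the pointwise bound $\inf u>\bar u-n$ from \eqref{e26}, which rests on Ivey's theorem that $R>0$. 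Your argument avoids both the case analysis and the appeal to Ivey's result, and in addition supplies a justification (integrality and positivity of $c_1(M)^n$) for the volume lower bound that the paper only asserts; the paper's route, by contrast, stays closer to the soliton-specific pointwise identities. Two cosmetic points: the paper is internally inconsistent about the sign of the exponent in \eqref{e23} (the proof of Lemma \ref{l22} uses $\int_M(2\pi)^{-n}e^{-u}dv=1$, which is the normalization your Jensen step assumes), and the precise factor relating $[\omega_g]$ to $c_1(M)$ (i.e.\ $2\pi$ versus $\pi$) depends on the K\"ahler-form convention — either choice only changes the dimensional constant $c(n)$, so neither issue affects the validity of your argument.
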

\begin{proof}
Noting that the volume of a compact shrinking K\"ahler-Ricci soliton
always admits a lower bound by a constant depending only on $n$, so
in view of relation (\ref{e210}), it suffices to show that
$$\bar{u}\geq\min\{-4n,2\ln\Vol(M)-4n\ln(2\pi),-\frac{8\mathrm{F}}{\Vol(M)}\}.$$

Assume that $\bar{u}\leq-4n$. First of all, we claim that
$$\Vol\{x\in M|u(x)\leq\frac{\bar{u}}{2}\}\leq(2\pi)^ne^{\frac{\bar{u}}{2}}.$$
Actually, this follows from the normalizing condition (\ref{e23}):
$$1=\int_M(2\pi)^{-n}e^{-u}dv\geq\int_{\{u\leq\frac{\bar{u}}{2}\}}(2\pi)^{-n}e^{-u}dv\geq(2\pi)^{-n}e^{-\frac{\bar{u}}{2}}\Vol\{u\leq\frac{\bar{u}}{2}\}.$$

Suppose that $\bar{u}\leq 2\ln\Vol(M)-4n\ln(2\pi)$, then
$\bar{u}\leq 2\ln\frac{\Vol(M)}{2}-2n\ln(2\pi)$. We have
$\Vol(M)\geq 2(2\pi)^ne^{\frac{\bar{u}}{2}}\geq
2\Vol\{u\leq\frac{\bar{u}}{2}\}$, which implies that
$$\Vol\{u\geq\frac{\bar{u}}{2}\}=\Vol\big(M\backslash\{u<\frac{\bar{u}}{2}\}\big)\geq\frac{1}{2}\Vol(M).$$
Integrating equation (\ref{e25}) we get
\begin{eqnarray}
\int_M|\nabla u|^2dv&=&\int_M(u-\bar{u}+n-R)dv\nonumber\\
&=&\int_M(u-\bar{u}+\triangle u)dv\nonumber\\
&=&\int_M(u-\bar{u})dv\nonumber\\
&=&\int_{\{u\geq\frac{\bar{u}}{2}\}}(u-\bar{u})dv+\int_{\{u\leq\frac{\bar{u}}{2}\}}(u-\bar{u})dv\nonumber\\
&\geq&-\frac{\bar{u}}{2}\Vol\{u\geq\frac{\bar{u}}{2}\}-n\Vol\{u\leq\frac{\bar{u}}{2}\}\nonumber\\
&\geq&(-\frac{\bar{u}}{4}-\frac{n}{2})\Vol(M)\nonumber\\
&\geq&-\frac{\bar{u}}{8}\Vol(M),\nonumber
\end{eqnarray}
where we used (\ref{e26}) in the second inequality and the assumption $\bar{u}\leq-4n$ in the last inequality. This finishes the proof of the lemma.
\end{proof}

\subsection{Bound the Ricci curvature up to a conformal change}

Let $(M,g)$ be a K\"ahler-Ricci soliton whose diameter is less than $D$. By an easy computation, we have the following bound,
\begin{equation}\label{C1 bound}
\|u\|_{C^0}+\|\nabla u\|_{C^0}+\|R\|_{C^0}\leq C_1
\end{equation}
for some $C_1=C_1(n,D)$, which does not depend on the specified
K\"ahler-Ricci soliton; see \cite[\S 3.1]{Zh2} or \cite{SeTi} for a
proof. As a consequence, the Ricci curvature of
$\tilde{g}=e^{-\frac{u}{n-1}}g$, which is a conformal Hermitian
metric of $g$, admits a two-sided bound \cite[\S 3.2]{Zh2}
\begin{equation}\label{Ricci bound}
|Ric(\tilde{g})|_{\tilde{g}}\leq C_2
\end{equation}
by a constant $C_2=C_2(n,D)$. The other observation is that $\tilde{g}$ and $g$ itself are uniformly equivalent to each other.

\section{Proof of Theorem \ref{t1}}

Now let $(M_i,g_i)$ be a sequence of shrinking K\"ahler-Ricci
solitons which satisfies a uniform upper bound on both $c_1(M_i)^n$
and the Futaki invariant, then by Lemma \ref{l21} and \ref{l22},
there exists $D$ independent of $i$ such that $\diam(M_i)\leq D$. By
\cite{Zh2}, there exists a compact path metric space $(Y,d_\infty)$
such that
\begin{equation}\label{convergence0}
(M_i,g_i)\stackrel{d_{GH}}{\longrightarrow}(Y,d_\infty)
\end{equation}
along a subsequence. Denote by $\mathcal{R}$ the smooth part of $Y$
and $g_\infty$ the smooth metric on $\mathcal{R}$ which induces
$d_\infty$. From \cite{Zh2}, the convergence takes place smoothly on
$\mathcal{R}$. Thus, $\mathcal{R}$ admits a natural complex
structure for which the metric $g_\infty$ is K\"ahlerian and
satisfies a shrinking K\"ahler-Ricci soliton equation on
$\mathcal{R}$.

Following Cheeger and Colding \cite{ChCo1}, denote by
$\mathcal{S}_k$ the set of points $y\in Y$ any of whose tangent cone
splits off at most a $k$-dimensional Euclidean space. By
\cite{ChCo1}, together with Claim 3.9 in \cite{Zh2}, we have
$\dim(\mathcal{S}_k)\leq k$, $\mathcal{S}_k\subset\mathcal{S}_{k+1}$
for all $k$. Furthermore, according to \cite[Theorem 1.1]{Zh2}, the
singular set equals $\mathcal{S}_{2n-2}$. So, to prove the Theorem
\ref{t1}, it suffices to show that both
$\mathcal{S}_{2n-2}\backslash\mathcal{S}_{2n-3}$ and
$\mathcal{S}_{2n-3}\backslash\mathcal{S}_{2n-4}$ are empty sets.

\subsection{The tangent cone of $Y$}

From now on, we fix a singular point $y\in\mathcal{S}=Y\backslash\mathcal{R}$ and a tangent cone at $y$, which is given by
\begin{equation}\label{tangent cone}
(Y_y,d_y,o)=\lim_{j\rightarrow\infty}(Y,\rho_j^{-1}d_\infty,y)
\end{equation}
where $\rho_j\rightarrow 0$ is a sequence of positive numbers and
the convergence is taken  in the pointed Gromov-Hausdorff topology.
Then $Y_y=\mathbb{R}^{k}\times C(X)$ for a path metric space $X$
with $\diam(X)\leq 2\pi$, where $\mathbb{R}^{k}$ denotes the
$k$-Euclidean space. We assume that, in the splitting, $k$ is
maximal. Denote by $\mathcal{R}_y$ the regular part of $Y_y$.

Let $y_i\in M_i$ such that $y_i\rightarrow y$. Combining with (\ref{convergence0}), along a subsequence $i_j\rightarrow\infty$, we have
\begin{equation}\label{convergence1}
(M_{i_j},\rho_j^{-2}g_{i_j},y_{i_j})\longrightarrow(Y_y,d_y,o),
\end{equation}
as $j\rightarrow\infty$. By the argument in \cite{Zh2}, using that
$\rho_j^{-2}g_{i_j}$ is a shrinking Ricci soliton, one can check
that the singular set $Y_y\backslash\mathcal{R}_y$ is closed and has
Hausdorff dimension $\leq 2n-2$. Denote by $g_y$ the metric on
$\mathcal{R}_y$ which induces $d_y$.

\begin{claim}\label{c31}
Passing a subsequence if necessary, the convergence in (\ref{convergence1}) is smooth on $\mathcal{R}_y$.
\end{claim}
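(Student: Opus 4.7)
The plan is to adapt the argument of \cite{Zh2} establishing smoothness on the regular set of the original Gromov--Hausdorff limit to the rescaled sequence producing the tangent cone. The key point is that every estimate used in \cite{Zh2} survives the rescaling $g_{i_j}\mapsto\rho_j^{-2}g_{i_j}$: the conformal change of \S 2.4 and the pointwise bounds (\ref{C1 bound}) depend only on the diameter $D$, which is controlled uniformly, and the $C^0$-bound on $u_{i_j}$ is intrinsic to the function rather than to the metric.

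First I would verify that the conformally related metrics $\tilde h_j=\rho_j^{-2}\tilde g_{i_j}=e^{-u_{i_j}/(n-1)}\rho_j^{-2}g_{i_j}$ have uniformly bounded Ricci curvature. Since Ricci viewed as a $(0,2)$ tensor is invariant under constant rescaling, estimate (\ref{Ricci bound}) gives $|\Ric(\tilde h_j)|_{\tilde h_j}=\rho_j^2\,|\Ric(\tilde g_{i_j})|_{\tilde g_{i_j}}\leq\rho_j^2 C_2$, which is uniformly bounded (indeed small). The bound $\|u_{i_j}\|_{C^0}\leq C_1$ persists, so $\tilde h_j$ is uniformly equivalent to $\rho_j^{-2}g_{i_j}$; consequently $(M_{i_j},\tilde h_j,y_{i_j})$ Gromov--Hausdorff converges to a space biLipschitz equivalent to $(Y_y,d_y,o)$ with the same regular part $\mathcal{R}_y$.

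Anderson's $\varepsilon$-regularity theorem, in the form used by Cheeger--Colding for metrics with bounded Ricci, then produces harmonic coordinate charts of definite size around any $z\in\mathcal{R}_y$ in which a subsequence of $\tilde h_j$ converges in $C^{1,\alpha}$. To promote this to smooth convergence of $\rho_j^{-2}g_{i_j}$ I would invoke the soliton structure: rescaling (\ref{e1}) gives
$$\Ric(\rho_j^{-2}g_{i_j})+\Hess_{\rho_j^{-2}g_{i_j}}u_{i_j}=\rho_j^2\,(\rho_j^{-2}g_{i_j}),$$
together with the vanishing of the $(2,0)$-part of $\Hess u_{i_j}$ and the rescaled trace identity $\triangle u_{i_j}=n\rho_j^2-R(\rho_j^{-2}g_{i_j})$. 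In harmonic (or K\"ahler normal) coordinates this is a coupled elliptic system in $(g,u)$ whose coefficients and right-hand sides are controlled by the $C^0$-bound on $u_{i_j}$ and by the $C^{1,\alpha}$-compactness of $\tilde h_j$. Iterating standard Schauder estimates upgrades the convergence to $C^\infty$ on compact subsets of $\mathcal{R}_y$.

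The main obstacle is to guarantee that the harmonic charts have a uniformly definite size near $z$ and that the elliptic bootstrap is uniform in $j$; both hinge on the uniform Ricci bound for $\tilde h_j$ together with noncollapsing at $z$, the latter following from the volume cone structure of $Y_y$ at its regular points. This is the step where one must genuinely replay the argument of \cite{Zh2} rather than merely cite it, since the limiting equation is a degenerate (almost steady) K\"ahler--Ricci soliton rather than Einstein.
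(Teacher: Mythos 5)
Your setup reproduces the paper's: the paper likewise passes to the conformal metrics $\rho_j^{-2}\tilde g_{i_j}$ (normalized at the basepoint), observes via (\ref{Ricci bound}) that their Ricci curvature tends to $0$ after rescaling, checks using the $C^0$ and $C^1$ bounds (\ref{C1 bound}) that they converge to the \emph{same} pointed limit $(Y_y,d_y,o)$, and invokes Anderson to get $C^{1,\alpha}$ convergence on $\mathcal{R}_y$, whence $C^\alpha$ convergence of $\rho_j^{-2}g_{i_j}$ after multiplying back by the conformal factor. (One small point you elide: "biLipschitz equivalent to $(Y_y,d_y,o)$" is not quite enough -- to identify the $C^{1,\alpha}$ limit of $\tilde h_j$ with $g_y$ one needs the conformal factor to converge to the constant $1$, which is why the paper normalizes by $u_{i_j}(y_{i_j})$ and uses $\|\nabla u_{i_j}\|_{C^0}\to 0$ in the rescaled metric, cf.\ (\ref{e32}).)

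Where you genuinely diverge is the upgrade from $C^\alpha$ to $C^\infty$. The paper does \emph{not} bootstrap the elliptic soliton system; it treats the rescaled solitons as time slices of Ricci flows and applies Perelman's pseudolocality theorem together with Shi's derivative estimates (as in Claim 3.10 of \cite{Zh2} and Theorem 4.2 of \cite{FZZ}). Your elliptic route is the classical one from the soliton-compactness literature and can be made to work, but as written it has a gap: the system $\triangle g=-2\Hess u+\lambda g+Q(g,\partial g)$, $\triangle u=n\lambda-R$ is \emph{circular} under naive Schauder iteration, since $R$ carries exactly the same number of derivatives of $g$ as $\triangle g$ does, so each pass returns $g\in W^{2,p}$, $u\in W^{2,p}$ without gaining regularity. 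The loop is broken only by inserting the first-order soliton identity (\ref{e25}), i.e.\ $R+|\nabla u|^2=u-\bar u+n$ in its rescaled form, which expresses $R$ through one fewer derivative of the unknowns and lets the alternating bootstrap $u\in C^{k,\alpha}\Rightarrow R\in C^{k-1,\alpha}\Rightarrow u\in C^{k+1,\alpha}\Rightarrow g\in C^{k+1,\alpha}$ close. You would also need to address that the $C^{1,\alpha}$ harmonic charts are adapted to $\tilde h_j$ while the soliton equation is satisfied by $\rho_j^{-2}g_{i_j}$, which at this stage is controlled only in $C^\alpha$ (the conformal factor is controlled in $C^1$, not $C^{1,\alpha}$), so the starting regularity for the Schauder step must be justified, e.g.\ by first running the $W^{2,p}$ theory. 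If you supply the identity (\ref{e25}) and this bookkeeping, your argument is a legitimate alternative to the paper's parabolic one; the parabolic route buys uniformity "for free" from pseudolocality, while the elliptic route stays entirely within the time-slice and makes the role of the soliton identities explicit.
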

\begin{proof}
Basically, the proof is the same as the proof of Theorem 1.1 in
\cite{Zh2}. We sketch it here; the main step is to show the
$C^\alpha$ convergence on $\mathcal{R}_y$.

Let $u_i$ be the associated potential function of $g_i$. Then,
passing a subsequence if necessary, $u_i$ converges to a Lipschitz
function $u_\infty$ on $Y$. Denote by
$\tilde{g}_i=e^{\frac{1}{n-1}(u_i(y_i)-u_i)}g_i$ the conformal
Hermitian metric on $M_i$. The important feature is that
$\tilde{g}_i$ has a two-sided bound on Ricci curvature; see
(\ref{Ricci bound}). Thus,
\begin{equation}\label{e31}
|Ric(\rho_j^{-2}\tilde{g}_{i_j})|\rightarrow 0 \hspace{0.3cm}\mbox{as }j\rightarrow\infty.
\end{equation}
Now applying the uniform bound of $u_i$, cf. (\ref{C1 bound}), one verifies that (passing a subsequence if necessary)
\begin{equation}\label{convergence2}
(M_{i_j},\rho_j^{-2}\tilde{g}_{i_j},y_{i_j})\longrightarrow(Y_y,d_y,o)
\end{equation}
as $j\rightarrow\infty$; see the proof of Claim 3.9 in \cite{Zh2}
for details. By \cite{An2}, the later convergence is in the
$C^{1,\alpha}$ topology on $\mathcal{R}_y$. More precisely, for any
compact subset $K\subset\mathcal{R}_y$, there exist a sequence of
diffeomorphic embeddings $\phi_j:K\rightarrow M_{i_j}$ such that
\begin{eqnarray}\label{C1alpha convergence}
\|\phi_j^*(\rho_j^{-2}\tilde{g}_{i_j})-g_y\|_{C^{1,\alpha}(K)}\rightarrow 0
\end{eqnarray}
as $j\rightarrow\infty$. With respect to the metric
$\rho_j^{-2}\tilde{g}_{i_j}$ on $\phi_j(K)$, by the $C^1$ estimate
of the potential function (\ref{C1 bound}), we have
\begin{equation}\label{e32}
\|u_{i_j}(y_{i_j})-u_{i_j}\|_{C^0(\phi_j(K))}+\|\nabla u_{i_j}\|_{C^0(M_{i_j})}\rightarrow 0,\hspace{0.3cm}\mbox{ as }j\rightarrow\infty.
\end{equation}
Passing to $g_{i_j}$, the pull-backed metric
\begin{equation}\label{e33}
\phi_j^*(\rho_j^{-2}g_{i_j})=\phi_j^*(e^{-\frac{1}{n-1}(u_{i_j}(y_{i_j})-u_{i_j})}\rho_j^{-2}\tilde{g}_{i_j})=
e^{-\frac{1}{n-1}(u_{i_j}(y_{i_j})-u_{i_j}\circ\phi_j)}\phi_j^*(\rho_j^{-2}\tilde{g}_{i_j})
\end{equation}
has a uniform $C^\alpha$ bound on $K$. Hence, combining this with
(\ref{C1alpha convergence}) and (\ref{e32}) we get the $C^\alpha$
convergence of $\phi_j^*(\rho_j^{-2}g_{i_j})$  to $g_y$ on $K$.

Being known the $C^\alpha$ convergence, the $C^\infty$ convergence
of (\ref{convergence1}) (along a subsequence) follows from a
regularity argument based on Perelman's pseudolocality theorem
\cite{Pe} (see also Theorem 4.2 in \cite{FZZ}) and Shi's gradient
estimate \cite{Shi, Ha95b} for Ricci flow, since the metrics in
consideration are shrinking Ricci solitons and can be seen as time
slices of the corresponding Ricci flows. See the proof of Claim 3.10
in \cite{Zh2}, for example.  We omit the details here.
\end{proof}

Let $J_{i_j}$ be the canonical complex structure on $M_{i_j}$. By
the smooth convergence, passing a subsequence if necessary, there
exist a family of compact subsets $K_j$ satisfying $K_j\subset
K_{j+1},\cup K_j=\mathcal{R}$, and embeddings
$$\phi_j:K_j\longrightarrow M_{i_j}$$
such that
$\phi_j^*(\rho_j^{-2}g_{i_j})\stackrel{C^\infty}{\longrightarrow}g_y$
and $\phi_j^*J_{i_j}\stackrel{C^\infty}{\longrightarrow}J_\infty$
for a complex structure $J_\infty$ on $\mathcal{R}_y$. Obviously
$J_\infty$ is parallel with respect to $g_y$. We claim that
$J_\infty$ splits as a product complex structure on the regular part
$\mathcal{R}_y$, which implies that, in the splitting of $Y_y$, $k$
is even.

We need the following assertion.

\begin{claim}\label{c32}
Let $\nabla$ and $\tilde{\nabla}$ be the Levi-Civita connections of
$\rho_j^{-2}g_{i_j}$ and
$\rho_j^{-2}\tilde{g}_{i_j}=e^{\frac{1}{n-1}(u_{i_j}(y_{i_j})-u_{i_j})}\rho_j^{-2}
g_{i_j}$ respectively. Then we have
\begin{equation}\label{e34}
\|\tilde{\nabla}-\nabla\|_{C^0(M_{i_j})}\rightarrow 0,\hspace{0.3cm}\mbox{ as }j\rightarrow\infty,
\end{equation}
where the $C^0$ norm is taken with respect to the metric $\rho_j^{-2}\tilde{g}_{i_j}$.
\end{claim}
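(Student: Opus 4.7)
The plan is to reduce the claim to three standard ingredients: the classical conformal-change formula for the Levi-Civita connection, the uniform $C^1$ bound on the potential function coming from \eqref{C1 bound}, and the scaling behaviour of a $(1,2)$-tensor under a constant rescaling of the metric.

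Concretely, I would write $\tilde g_{i_j} = e^{2\varphi_j} g_{i_j}$ with $\varphi_j = \frac{1}{2(n-1)}(u_{i_j}(y_{i_j}) - u_{i_j})$ and apply the classical formula to express the difference as the $(1,2)$-tensor
$$T_j(X,Y) := \tilde{\nabla}_X Y - \nabla_X Y = d\varphi_j(X)\,Y + d\varphi_j(Y)\,X - g_{i_j}(X,Y)\,\nabla \varphi_j,$$
whose pointwise norm satisfies $|T_j|_{g_{i_j}} \leq C(n)\,|\nabla u_{i_j}|_{g_{i_j}}$, a quantity bounded uniformly in $i$ by \eqref{C1 bound}. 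Since multiplying a metric by a positive constant leaves Christoffel symbols unchanged, $T_j$ is literally the same tensor for $g_{i_j}$ and for $\rho_j^{-2} g_{i_j}$, while the pointwise norm of a $(1,2)$-tensor scales as $|T|_{\lambda^2 g} = \lambda^{-1}|T|_g$. Taking $\lambda = \rho_j^{-1}$ then yields $|T_j|_{\rho_j^{-2} g_{i_j}} = \rho_j\,|T_j|_{g_{i_j}} \leq C\rho_j$. To finish, I would note that the $C^0$ bound on $u_{i_j}$ pinches the conformal factor between two positive constants, so $\rho_j^{-2}\tilde g_{i_j}$ and $\rho_j^{-2} g_{i_j}$ are uniformly equivalent; hence the norm actually requested in \eqref{e34} differs from $|T_j|_{\rho_j^{-2} g_{i_j}}$ by at most a uniform multiplicative factor, and letting $\rho_j \to 0$ concludes the argument.

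There is no substantive obstacle here; the whole proof is bookkeeping, once one correctly identifies the valence of the difference tensor. What deserves emphasis, and is the reason the estimate is worth isolating, is that $T_j$ is a $(1,2)$-tensor rather than a scalar, so the homothety by $\rho_j^{-2}$ produces exactly the factor $\rho_j$ that drives the norm to zero. This estimate is the bridge invoked in the proof of Claim \ref{c31}: it transfers $C^{1,\alpha}$ convergence of $\rho_j^{-2}\tilde g_{i_j}$, provided by Anderson's theorem via the two-sided Ricci bound \eqref{Ricci bound}, into information about the connections of the original rescaled solitons $\rho_j^{-2} g_{i_j}$.
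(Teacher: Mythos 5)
Your argument is correct and is essentially the paper's: both write the connection difference via the standard conformal-change formula as a $(1,2)$-tensor built from $du_{i_j}$, bound it by the uniform $C^1$ estimate \eqref{C1 bound}, and observe that rescaling the metric by $\rho_j^{-2}$ contracts the relevant norm by the factor $\rho_j\to 0$ (the paper phrases this as $\|\nabla u_{i_j}\|_{C^0}\to 0$ with respect to $\rho_j^{-2}\tilde g_{i_j}$, which is the same scaling fact you make explicit through the tensor valence). No gaps.
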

\begin{proof}
Let $\tilde{\Gamma}$ and $\Gamma$ be the Christoffel symbols of the
Levi-Civita connections of $\rho_j^{-2}\tilde{g}_{i_j}$ and
$\rho_j^{-2}g_{i_j}$. Then, in local coordinate, the connections
have the difference
\begin{eqnarray}
\tilde{\nabla}-\nabla&=&(\tilde{\Gamma}_{pq}^s-\Gamma_{pq}^s)dx^p\otimes dx^q\otimes\frac{\partial}{\partial x^s}\nonumber\\
&=&\frac{1}{2(n-1)}(\frac{\partial u_{i_j}}{\partial x^p}\delta_{qs}-\frac{\partial u_{i_j}}{\partial x^q}\delta_{ps}+\frac{\partial u_{i_j}}{\partial x^t}g^{st}g_{pq})
dx^p\otimes dx^q\otimes\frac{\partial}{\partial x^s}\nonumber.
\end{eqnarray}
From the $C^1$ estimate of $u_{i_j}$, see (\ref{C1 bound}), we have
$$\|\nabla u_{i_j}\|_{C^0(M_{i_j})}\rightarrow 0,\hspace{0.3cm}\mbox{ as }j\rightarrow\infty,$$
where the $C^0$ norm is taken with respect to $\rho_j^{-2}\tilde{g}_{i_j}$. The desired result then follows.
\end{proof}

If $v$ is an almost parallel vector field of $M_{i_j}$ with respect
to $\rho_j^{-2}\tilde{g}_{i_j}$, then $J(v)$ is also an almost
parallel vector field with respect to $\rho_j^{-2}\tilde{g}_{i_j}$.
This is because
$$\tilde{\nabla}(J(v))=(\tilde{\nabla}J)v+J(\tilde{\nabla}v)=\big((\tilde{\nabla}-\nabla)J\big)v+J(\tilde{\nabla}v),$$
where $\tilde{\nabla}-\nabla\rightarrow 0$, while $J$ is an almost
isometric endomorphism of $TM$ with metric
$\rho_j^{-2}\tilde{g}_{i_j}$ whenever $j$ is large enough. Then, in
view of the convergence (\ref{convergence2}), by the explanation in
\cite[Page 399]{ChTi}, if $v$ is the gradient of a harmonic function
which induces an almost splitting, so does $Jv$. For more details,
see the proof of Theorem 9.1 in \cite{ChCoTi}; note that the vector
field $v$ satisfies assumption of Lemma 9.14 in \cite{ChCoTi} if and
only if $Jv$ does, up to a modification of the constants $c$ and
$\delta$ there. As a direct consequence, passing to the limit space
$Y_y$, we have that
\begin{equation}\label{e35}
\mathcal{S}_{2l+1}\backslash\mathcal{S}_{2l}=\emptyset,\hspace{0.3cm}\mbox{ for all integer }l.
\end{equation}
In particular,
\begin{equation}\label{e36}
\mathcal{S}_{2n-3}\backslash\mathcal{S}_{2n-4}=\emptyset.
\end{equation}
Furthermore, the Euclidean factor
$\mathbb{R}^{k}=\mathbb{C}^{\frac{k}{2}}$ with respect to $J_\infty$
and on the regular part of $Y_y$, we have the splitting structure
$$\mathcal{R}_y=\mathbb{C}^{\frac{k}{2}}\times\mathcal{R}(C(X)),$$
where $\mathcal{R}(C(X))$ denotes the regular part of $C(X)$.

In the next subsection, we will make use of the convergences
(\ref{convergence1}) and (\ref{convergence2}) to remove the
singularities of type
$\mathcal{S}_{2n-2}\backslash\mathcal{S}_{2n-3}$ when $M_i$ are
K\"ahlerian.

\subsection{The slice argument and regularity}

As in \cite{ChCoTi, Ch1}, we will show the following regularity
property, which is sufficient to prove the main theorem in view of
the convergence (\ref{convergence1}).

Let $y\in\mathcal{S}_{2n-2}\backslash\mathcal{S}_{2n-3}$ be a
typical singular point. Then the tangent cone
$Y_y=\mathbb{R}^{2n-2}\times C(S_t)$ where $S_t$ is a round circle
with circumference $t<2\pi$. Let $0\in\mathbb{R}^{2n}$,
$\underline{0}\in\mathbb{R}^{2n-2}$ be the origins and $x^*\in
C(S_t)$ be the vertex. Then $o=(\underline{0},x^*)$. A typical point
of $Y_y$ can be expressed as $(z,r,x)$ where
$z\in\mathbb{R}^{2n-2}$, $x\in S_t$ and $r\geq0$ denotes the radial
coordinate on $C(S_t)$.

\begin{proposition}\label{p31}
For all $\eta>0$, there exists $j_0$ such that
\begin{equation}\label{e37}
d_{GH}\big(B_{\rho_j^{-2}g_{i_j}}(y_{i_j},1),B(0,1)\big)\leq\eta,\hspace{0.3cm}\forall j\geq j_0.
\end{equation}
Here, $B(0,1)$ denotes the unit ball in the $2n$-Euclidean space.
\end{proposition}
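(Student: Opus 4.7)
The plan is to argue that a singular point of type $\mathcal{S}_{2n-2}\setminus\mathcal{S}_{2n-3}$ cannot occur on a K\"ahler limit, so the assumed tangent cone $Y_y=\mathbb{R}^{2n-2}\times C(S_t)$ is forced to be flat Euclidean; (\ref{e37}) then follows directly from the convergence (\ref{convergence1}) together with (\ref{e32}). The strategy is the slice argument of Cheeger and Cheeger-Colding-Tian \cite{ChCoTi, Ch1}, applied not to $g_{i_j}$ itself but to the conformal metric $\tilde g_{i_j}$, because $\Ric(\rho_j^{-2}\tilde g_{i_j})\to 0$ by (\ref{e31}) places us in the bounded-Ricci framework where harmonic almost-splitting functions behave well.

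First I would apply the Cheeger-Colding almost-splitting theorem to $(M_{i_j},\rho_j^{-2}\tilde g_{i_j},y_{i_j})$: since the limit $Y_y$ splits off an $\mathbb R^{2n-2}$ factor, for each $\epsilon>0$ and each radius $R$ there exist, for $j$ sufficiently large, harmonic functions $b^j_1,\dots,b^j_{2n-2}$ on $B(y_{i_j},R)$ that are $L^2$-close to the coordinate projections onto the Euclidean factor and whose gradients form an almost-orthonormal frame with small Hessian. Next I would use the K\"ahler condition: by Claim \ref{c32} combined with the second estimate in (\ref{e32}), $J_{i_j}$ is almost parallel for $\tilde\nabla$ after rescaling, so exactly as in the derivation of (\ref{e35}) each $J\nabla b^j_\alpha$ is itself an almost-splitting direction, and maximality of the $\mathbb{R}^{2n-2}$ factor in $Y_y$ forces the span of $\{\nabla b^j_\alpha\}$ to be $J$-invariant. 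The $b^j_\alpha$ therefore organize into $n-1$ approximately holomorphic complex-valued functions $w^j_\beta=b^j_{2\beta-1}+\sqrt{-1}\,b^j_{2\beta}$.

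The key step, and main technical obstacle, is the slice analysis. Consider the map $F_j=(w^j_1,\dots,w^j_{n-1}):B(y_{i_j},R)\to\mathbb C^{n-1}$. By a Sard-type slicing argument as in \cite{Ch1}, for almost every $z$ near $0$ the fiber $\Sigma_j(z)=F_j^{-1}(z)$ is a smooth real $2$-submanifold of uniformly bounded area, and by the approximate holomorphicity of $F_j$ its second fundamental form and angle deviation from $J$-invariance are small. The Gauss equation together with (\ref{e31}) bounds the intrinsic Gauss curvature of $\Sigma_j(z)$ uniformly, and its integral tends to zero; running the integral Gauss-Bonnet argument of \cite[Theorem 9.1]{ChCoTi} forces the limit slice, which is a cross section of the $C(S_t)$ factor, to be a flat disk of total cone angle exactly $2\pi$. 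Hence $t=2\pi$ and $Y_y\cong\mathbb R^{2n}$. Finally, by (\ref{e32}) the rescaled balls $B_{\rho_j^{-2}g_{i_j}}(y_{i_j},1)$ and $B_{\rho_j^{-2}\tilde g_{i_j}}(y_{i_j},1)$ are Gromov-Hausdorff close; combining this with (\ref{convergence1}) and $Y_y\cong\mathbb R^{2n}$ yields (\ref{e37}). The most delicate point is verifying uniform integrability of the intrinsic curvature of $\Sigma_j(z)$ and integer-valuedness of the relevant Chern-class computation on the slice; this is where the bounded Ricci of $\tilde g_{i_j}$, the approximate holomorphicity, and the $C^{1,\alpha}$ regularity from \cite{An2} must all be combined carefully.
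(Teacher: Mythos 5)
Your overall architecture matches the paper's: pass to a conformal metric to get Ricci bounds, use the Cheeger--Colding--Tian slicing to produce good $2$-dimensional fibers transverse to the $\mathbb{C}^{n-1}$ factor, and derive $t=2\pi$ from an integrality statement on those fibers; the final reduction of (\ref{e37}) to $t=2\pi$ via (\ref{e32}) and (\ref{convergence1}) is also fine. But the central step is not justified and, as written, fails. You claim that ``the Gauss equation together with (\ref{e31}) bounds the intrinsic Gauss curvature of $\Sigma_j(z)$ uniformly, and its integral tends to zero.'' Neither half is available: (\ref{e31}) is a bound on the \emph{Ricci} curvature of $\rho_j^{-2}\tilde g_{i_j}$ and gives no control on sectional curvatures, and the slicing construction controls the Hessians of the almost-splitting functions only in an integral ($L^2$) sense, so the second fundamental form of a generic fiber is not pointwise bounded either. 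Hence the Gauss equation yields nothing pointwise about the intrinsic curvature of the slice, and you have not identified any quantity whose integral over the slice tends to zero. A literal Gauss--Bonnet count would moreover require controlling the geodesic curvature of the boundary curve and the topology of the fiber, neither of which you address; the reference to Theorem 9.1 of \cite{ChCoTi} is misplaced, since that result is the vanishing of the odd strata (the paper's (\ref{e35})), not a cone-angle computation.

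What actually closes the argument is the K\"ahler--Ricci soliton equation itself, which your slice step never invokes. The paper introduces the second auxiliary metric $\tilde{\tilde g}=e^{\frac1n(u_{i_j}(y_{i_j})-u_{i_j})}\rho_j^{-2}g_{i_j}$ precisely so that the Chern curvature of the induced Hermitian metric on $L=\det TM|_{\Sigma_{z,r}}$ becomes, via $R_{p\bar q}+\partial_p\partial_{\bar q}u=g_{p\bar q}$, exactly $\rho_j^{2}g_{p\bar q}\,dz^p\wedge d\bar z^q$; its norm in the rescaled metric is $O(\rho_j^2)$, and since $\Vol_{\tilde g}(\Sigma_{z,r})$ is bounded by (\ref{e317}), the Chern--Weil integral of $c_1(L)$ over the slice is $\Psi\to 0$. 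The integrality then comes not from Gauss--Bonnet for the induced metric but from a transgression on the unit circle bundle of $L$: a section transversal to zero, prescribed on $\partial\Sigma_{z,r}=\Lambda^{-1}(z,r)$, contributes integer winding numbers at its zeros, while the boundary integral of the connection form converges, using the $C^1$ convergence near $\Lambda^{-1}(z,r)$ together with (\ref{e318}), to $t/2\pi$; hence $t/2\pi\in\mathbb{Z}$ and $t=2\pi$. Without the soliton equation identifying the representative of $c_1$ as the (rescaled) metric itself, the smallness of the slice integral --- the heart of the matter --- has no source, so your proposal has a genuine gap at exactly the step you flagged as delicate.
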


Let $\Psi=\Psi(j,l|\delta,D,n)$ be a positive constant that may vary in the following argument such that for any fixed $n,\delta$ and $D$,
$$\Psi(j,l|\delta,D,n)\rightarrow0,\hspace{0.3cm}\mbox{ as }j\rightarrow\infty\mbox{ and }l\rightarrow\infty.$$

\begin{proof}[Proof of Proposition \ref{p31}]
For simplicity, we fix a specified manifold
$(M,g)=(M_{i_j},\rho_j^{-2}g_{i_j})$ and let
$$\tilde{g}=e^{\frac{1}{n-1}(u_{i_j}(y_{i_j})-u_{i_j})}\rho_j^{-2}g_{i_j},\hspace{0.3cm}\tilde{\tilde{g}}=e^{\frac{1}{n}(u_{i_j}(y_{i_j})-u_{i_j})}
\rho_j^{-2}g_{i_j}.$$ Let $u=u_{i_j}$ be the potential function and
$\rho=\rho_j$ be the rescaling factor. The metric
$\tilde{\tilde{g}}$ is introduced for the convenience of computing
the Chern class.

Since (\ref{convergence2}) holds, for any $l$, we have
\begin{equation}\label{e38}
d_{GH}(B_{\tilde{g}}(y,l),B(0,l))<l^{-1}
\end{equation}
whenever $j$ is large enough. Let $F:B_{\tilde{g}}(y,l)\rightarrow
Y_y$ be a Gromov-Hausdorff approximation realizing the convergence
(\ref{convergence2}). We remark that by the proof of Claim
\ref{c31}, $F$ also realizes the convergence (\ref{convergence1}).
So by the smooth convergence on
$\mathbb{R}^{2n-2}\times(C(S_t)\backslash\{x^*\})$, we may assume
that $F$ is holomorphic and diffeomorphic on the range outside of
$\mathbb{R}^{2n-2}\times B(x^*,\frac{1}{3})$. Recall that by
\cite{ChCoTi}, see also \cite{Ch1}, there exist smooth functions
\begin{eqnarray}
\Lambda=(\Phi,{\rm r}):B_{\tilde{g}}(y,3)\rightarrow\mathbb{R}^{2n-2}\times\mathbb{R}_+\label{e39}
\end{eqnarray}
such that
\begin{equation}\label{e312}
\|\Phi-z\circ F\|_{C^0}+\|{\rm r}-r\circ F\|_{C^0}\leq\Psi.
\end{equation}
Furthermore, if we set
\begin{equation}\label{e314}
\Sigma_{z,r}=\Phi^{-1}(z)\cap\Lambda^{-1}(B(\underline{0},1)\times[0,r]),
\end{equation}
then, there exist subsets $B_l\subset B(\underline{0},1)$ and
$D_l\subset B(\underline{0},1)\times[0,1]$ consisting of regular
values of $\Phi$ and $\Lambda$ such that
\begin{eqnarray}
&\Vol(B_l)\geq(1-\Psi)\Vol(B(\underline{0},1)),\label{e315}\\
&\Vol(D_l)\geq(1-\Psi)\Vol(B(\underline{0},1)\times[0,1]),\label{e316}
\end{eqnarray}
and that for all $z\in B_l$ and $(z,r)\in D_l$,
\begin{eqnarray}
&|\Vol_{\tilde{g}}(\Sigma_{z,r})-\frac{r^2}{2}t|\leq\Psi.\label{e317}\\
&|\Vol_{\tilde{g}}(\Lambda^{-1}(z,r))-rt|\leq\Psi.\label{e318}
\end{eqnarray}
Note that if $(z,r)\in D_l$, then $\Lambda^{-1}(z,r)=\partial\Sigma_{z,r}$.

Then one can use the differential character of the first Chern
class on suitable slices of $\Lambda^{-1}(z,r)$ to give a proof of
the proposition; see \cite{ChTi}, \cite{Ch2} and \cite{Ch1} for
details. The idea to use the differential character first appeared
in \cite{ChCoTi}. In the following, we give an alternative and
direct proof of the proposition based on the transgression theory.
The argument is also inspired by the work of Cheeger, Colding and
Tian for K\"ahler manifolds with bounded Ricci curvatures, cf.
\cite{ChCoTi}, \cite{ChTi}, \cite{Ch2} and \cite{Ch1}.

Choose $(z,r)\in D_l$ such that $\frac{1}{2}\leq r\leq 1$ and let
$L$ be the determinant bundle of $TM$ restricted to $\Sigma_{z,r}$.
Denote also by $\tilde{\tilde{g}}$ the induced Hermitian metric on
$L$. Denote by $\widetilde{\widetilde{\Theta}}$ the curvature form
associated to the Chern connection of $\tilde{\tilde{g}}$. Let
$S\Sigma_{z,r}$ be the unit circle bundle and
$\pi:S\Sigma_{z,r}\rightarrow\Sigma_{z,r}$ be the projection.  Then
there exist connection form $\omega$ and curvature form $\Omega$ on
$S\Sigma_{z,r}$ such that
\begin{equation}
\Omega=\pi^*\widetilde{\widetilde{\Theta}}=d\omega.
\end{equation}

Note that the curvature form $\widetilde{\widetilde{\Theta}}$ gives the first Chern class of $L$:
\begin{eqnarray}
\frac{\sqrt{-1}}{2\pi}\widetilde{\widetilde{\Theta}}&=&P_{c_1}(L)|_{\Sigma_{z,r}}\nonumber\\
&=&-\frac{\sqrt{-1}}{2\pi}\partial_p\partial_{\bar{q}}\log\det(\tilde{\tilde{g}})dz^p\wedge d\bar{z}^q|_{\Sigma_{z,r}}\nonumber\\
&=&\frac{\sqrt{-1}}{2\pi}(R_{p\bar{q}}+\partial_p\partial_{\bar{q}}u)dz^p\wedge d\bar{z}^q|_{\Sigma_{z,r}}\nonumber\\
&=&\frac{\sqrt{-1}}{2\pi}\rho^2 g_{p\bar{q}}dz^p\wedge d\bar{z}^q|_{\Sigma_{z,r}}\nonumber,
\end{eqnarray}
whose norm tends to $0$ uniformly as $j\rightarrow\infty$. Now
choose a transversal section $s:\Sigma_{z,r}\rightarrow L$ with
finite zero set $\{p_k\}_{k=1}^N$ such that $s=x\circ F$ on
$\partial\Sigma_{z,r}=\Lambda^{-1}(z,r)$ with respect to local
trivialization by $F$ on a neighborhood of $\Lambda^{-1}(z,r)$ .
Then we have
$(\frac{s}{|s|})^*\Omega=\widetilde{\widetilde{\Theta}}$ outside of
the zero set, so by (\ref{e317}) and using the $C^1$ equivalence of
$\tilde{g}$ and $\tilde{\tilde{g}}$,
\begin{eqnarray}
\Psi&=&\frac{\sqrt{-1}}{2\pi}\int_{\Sigma_{z,r}}\widetilde{\widetilde{\Theta}}\nonumber\\
&=&\lim_{\epsilon\rightarrow 0}\frac{\sqrt{-1}}{2\pi}\int_{\Sigma_{z,r}-\cup_{k}D(p_k,\epsilon)}\widetilde{\widetilde{\Theta}}\nonumber\\
&=&\lim_{\epsilon\rightarrow 0}\frac{\sqrt{-1}}{2\pi}\int_{\Sigma_{z,r}-\cup_{k}D(p_k,\epsilon)}(\frac{s}{|s|})^*\Omega\nonumber\\
&=&\lim_{\epsilon\rightarrow 0}\frac{\sqrt{-1}}{2\pi}\int_{\Sigma_{z,r}-\cup_{k}D(p_k,\epsilon)}d(\frac{s}{|s|})^*\omega\nonumber\\
&=&\sum_k\lim_{\epsilon\rightarrow 0}\frac{\sqrt{-1}}{2\pi}\int_{\partial D(p_k,\epsilon)}(\frac{s}{|s|})^*\omega+\frac{\sqrt{-1}}{2\pi}\int_{\Lambda^{-1}(z,r)}(\frac{x}{|x|}\circ F)^*\omega\nonumber\\
&=&\sum_k\lim_{\epsilon\rightarrow 0}\frac{\sqrt{-1}}{2\pi}\int_{\frac{s}{|s|}(\partial D(p_k,\epsilon))}\omega+\frac{\sqrt{-1}}{2\pi}\int_{\frac{x}{|x|}\circ F(\Lambda^{-1}(z,r))}\omega\nonumber.
\end{eqnarray}
Here, $D(p_k,\epsilon)$ denotes an $\epsilon$ ball around $p_k$ in
$\Phi^{-1}(z)$. In the last formula, each limit term
$\lim_{\epsilon\rightarrow
0}\frac{\sqrt{-1}}{2\pi}\int_{\frac{s}{|s|}(\partial
D(p_k,\epsilon))}\omega$ is an integer since locally $\omega$ can be
written as $d\log x+\tilde{\tilde{\theta}}$ where
$\tilde{\tilde{\theta}}$ denotes the horizontal term of $\omega$;
while $\frac{\sqrt{-1}}{2\pi}\int_{\frac{x}{|x|}\circ
F(\Lambda^{-1}(z,r))}\omega$ appoximates $\frac{t}{2\pi}$, the
corresponding integration on the circle $\{(r,x)|x\in S_t\}$ in the
cone $C(S_t)$, as $j\rightarrow\infty$ since the metric
$\tilde{\tilde{g}}$ approaches $g_y$ uniformly in the $C^1$ topology
around $\Lambda^{-1}(z,r)$ when $(z,r)\in D_l$ with $\frac{1}{2}\leq
r\leq 1$. Notice that the connection form $\omega$ is determined by
first derivative of the metric. This is sufficient to show that
$\frac{t}{2\pi}$ is an integer; then applying the relation
(\ref{e318}) we infer that $t=2\pi$.

This completes the proof of the proposition.
\end{proof}

Now, the proof of Theorem \ref{t1} follows easily.

\begin{proof}[Proof of Theorem \ref{t1}]
By (\ref{e36}) and Proposition \ref{p31},
$\mathcal{S}=\mathcal{S}_{2n-4}$. As the convergence on the regular
part is smooth, it satisfies a shrinking K\"ahler-Ricci soliton
equation there. The proof is completed.
\end{proof}

\section{On the K\"ahler-Ricci flow with positive $c_1$}

In this section, we use the above method to study the degeneration
of a K\"ahler-Ricci flow on a K\"ahler manifold $M$ with positive
$c_1(M)$. Let $g(t),t\in[0,\infty),$ be a solution to the
K\"ahler-Ricci flow on $M$
\begin{equation}\label{e40}
\frac{\partial}{\partial t}g_{i\bar{j}}=-R_{i\bar{j}}+g_{i\bar{j}}
\end{equation}
whose K\"ahler form lies in the K\"ahler class $\pi c_1(M)$. It is
well-known that the K\"ahler-Ricci flow preserves the volume, say
$\V=\Vol_{g(0)}(M)$. It is also known that $g(t)$ has the same
K\"ahler class. Thus, Hodge theory gives a family of potentials
$\phi(t)$ such that $g(t)=g(0)+\partial\bar{\partial}\phi(t)$. The
K\"ahler-Ricci flow is then equivalent to the complex Monge-Amp\`ere
equation for $\phi(t)$
\begin{equation}
\frac{\partial\phi(t)}{\partial t}=\log\frac{\det(g(0)+\partial\bar{\partial}\phi(t))}{\det(g(0))}+\phi(t).
\end{equation}

Let $u(t)$ be associated Ricci potential in the sense that
\begin{equation}\label{e41}
R_{i\bar{j}}(t)+\nabla_{i}\nabla_{\bar{j}}u(t)=g_{i\bar{j}}(t).
\end{equation}
Suppose $u(t)$ is normalized such that
\begin{equation}
\int e^{-u(t)}dv_{g(t)}=(2\pi)^n.
\end{equation}
Perelman proved the following estimates (see \cite{SeTi} for the proof)
\begin{equation}\label{e46}
\diam(M,g(t))+\|u(t)\|_{C^0}+\|\nabla u(t)\|_{C^0}+\|R(g(t))\|_{C^0}\leq C_1
\end{equation}
for some $C_1$ independent of $t$. It is also shown in \cite{SeTi} that $\mu(g(t),\frac{1}{2})$, the function defined in \S 2.2, has a uniform bound
\begin{equation}
|\mu(g(t),\frac{1}{2})|\leq C_1.
\end{equation}

In \cite{Se}, Sesum studied the convergence of K\"ahler-Ricci flow
with bounded Ricci curvature. One essential point in her argument is
that the metric derivative in time is uniformly bounded (in terms of
the Ricci curvature) and thus the metrics under the K\"ahler-Ricci
flow are locally equivalent to each other. Here, what we are
concerning is a K\"ahler-Ricci flow with uniformly bounded
$(2,0)$-part of $\Hess(u(t))$
\begin{equation}\label{e42}
|\nabla_{\frac{\partial}{\partial z^i}}\nabla_{\frac{\partial}{\partial z^j}}u(t)|\leq C_2.
\end{equation}
As we will see, the convergence result is same as the case of bounded Ricci curvature. The argument partially follows Sesum's line.

We first observe that (\ref{e42}) implies an immediate bound of $Ric+\Hess(u)$:
\begin{equation}\label{e43}
|Ric(t)+\Hess(u)(t)|\leq C_3
\end{equation}
for some $C_3=C_3(C_2,n)$. For any $g=g(t)$, let
$\tilde{g}=e^{-\frac{u}{n-1}}g$ be a conformal Hermitian metric and
denote by $\widetilde{Ric}$ its Ricci curvature. Then, cf.
\cite{Be},
\begin{equation}\label{e44}
\widetilde{Ric}=Ric+\Hess(u)+\frac{1}{2n-2}du\otimes du+\frac{1}{2n-2}(\triangle u-|\nabla u|^2)g.
\end{equation}
Together with $R+\triangle u=n$, $\widetilde{Ric}$ can be bounded as follows
\begin{equation}\label{e45}
|\widetilde{Ric}|_{\tilde{g}}\leq C_4e^{\frac{u}{n-1}}(1+|\nabla u|^2+|R|)
\end{equation}
for some $C_4=C_4(C_2,n)$.  Then applying (\ref{e46}) gives the bound
\begin{equation}
|\widetilde{Ric}|_{\tilde{g}}\leq C_5
\end{equation}
for some $C_5$ independent of $t$. Notice that the volume
$\Vol(M,g)$ is constant under the K\"ahler-Ricci flow. By
\cite{Zh2}, for any sequence of times $t_k\rightarrow\infty$, there
exists a compact path metric space $(Y,d)$ such that
$(M,g(t_k))\stackrel{d_{GH}}{\longrightarrow}(Y,d)$. The space $Y$
has a singular set $\mathcal{S}$ of codimension at least 2.
Furthermore, on the regular part
$\mathcal{R}=Y\backslash\mathcal{S}$, $d$ is induced by a
$C^{\alpha}$ metric $g_\infty$ and the convergence takes place
$C^\alpha$ there.

As we are on a K\"ahler-Ricci flow, the metric $g_\infty$ should have more regular property. Indeed we can prove

\begin{theorem}\label{t40}
Let $g(t),t\in[0,\infty),$ be a solution to the K\"ahler-Ricci flow
on a K\"ahler manifold $M$ with positive $c_1(M)$. Let $u(t)$ be a
family of Ricci potentials of $g(t)$. If (\ref{e42}) holds for some
$C_2$ independent of $t$, then along a subsequence
$(M,g(t))\stackrel{d_{GH}}{\longrightarrow}(Y,d)$, where $(Y,d)$ is
a path metric space with a closed singular set $\mathcal{S}$ of
codimension at least 4. On the regular set $Y\backslash\mathcal{S}$,
$d$ is induced by a smooth K\"ahler metric which satisfies a
K\"ahler-Ricci soliton equation. Furthermore, the convergence takes
place smoothly on $Y\backslash\mathcal{S}$.
\end{theorem}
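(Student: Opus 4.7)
The plan is to mimic the arguments of Section 3, substituting the shrinking soliton identity by the pointwise bounds furnished by hypothesis (\ref{e42}) together with Perelman's estimates (\ref{e46}). I sketch the three main steps.

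First, by \cite{Zh2} and the two-sided Ricci bound on the conformal metric $\tilde g(t)=e^{-u(t)/(n-1)}g(t)$ stated just before the theorem, a subsequence of times $t_k\to\infty$ yields $(M,g(t_k))\stackrel{d_{GH}}{\longrightarrow}(Y,d)$ with closed singular set $\mathcal{S}$ of codimension at least $2$ and $C^\alpha$ convergence on the regular part $\mathcal{R}$. The upgrade to smooth convergence follows exactly as in Claim~\ref{c31}, by applying Perelman's pseudolocality and Shi's higher derivative estimates directly to the K\"ahler-Ricci flow (\ref{e40}) at the time slices $t_k$; there is no longer any need to reinterpret the metrics as time slices of an auxiliary flow.

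Second, I would analyze a tangent cone $(Y_y,d_y,o)=\lim_j(Y,\rho_j^{-1}d,y)$ at a singular point $y$. The pointwise bound on $|\nabla u|_g$ from (\ref{e46}) gives $|\nabla u|_{\rho_j^{-2}g}=\rho_j|\nabla u|_g\to 0$, so Claim~\ref{c32} is unaffected and the limit complex structure $J_\infty$ on $\mathcal R_y$ is parallel; this forces $\mathcal{S}_{2l+1}\setminus\mathcal{S}_{2l}=\emptyset$ for all $l$, ruling out the codimension-$3$ stratum. To exclude the codimension-$2$ stratum I would re-run the transgression and Chern-class argument of Proposition~\ref{p31} almost verbatim: the only change is that the identification of $\tfrac{\sqrt{-1}}{2\pi}(R_{p\bar q}+\partial_p\partial_{\bar q}u)\,dz^p\wedge d\bar z^q$ with the rescaled K\"ahler form is replaced by the pointwise bound $C_3/(2\pi)$ coming from (\ref{e43}). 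Its integral over $\Sigma_{z,r}$ is controlled by $C_3(2\pi)^{-1}\Vol_g(\Sigma_{z,r})=C_3(2\pi)^{-1}\rho_j^{2}\Vol_{\rho_j^{-2}g}(\Sigma_{z,r})$, which tends to zero by (\ref{e317}); this is precisely what is needed to conclude $t=2\pi$, so the tangent cone is Euclidean and $y$ was not singular.

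The step I expect to demand the most care is the verification that $g_\infty$ on $\mathcal{R}$ satisfies an actual shrinking K\"ahler-Ricci soliton equation. For this I would invoke Perelman's monotonicity of $\mu(g(t),\tfrac12)$ along the K\"ahler-Ricci flow: since $\mu(g(t),\tfrac12)$ is nondecreasing and, by the uniform bound stated before the theorem, convergent as $t\to\infty$, the total defect
\[
\int_0^\infty\!\int_M\bigl|R_{ij}+\nabla_i\nabla_j f(t)-g_{ij}\bigr|^2 e^{-f(t)}\,dv\,dt
\]
is finite, where $f(t)$ realizes $\mu(g(t),\tfrac12)$. Along a further subsequence of the times $t_k$ the integrand tends to zero in $L^2$; invoking the $L^\infty$ estimate on $f(t)$ promised in \S 4, the minimizers $f(t_k)$ then converge smoothly on $\mathcal R$ to a function $f_\infty$ satisfying $\Ric(g_\infty)+\Hess(f_\infty)=g_\infty$. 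Decomposing into $(1,1)$ and $(2,0)$ parts recovers both equations of (\ref{e1}), so $g_\infty$ is a shrinking K\"ahler-Ricci soliton on $\mathcal R$. The main obstacle is precisely the uniform $L^\infty$ estimate on the minimizer $f(t)$, which the introduction advertises as part of the program of \S 4 and without which the smooth convergence of the $f(t_k)$ cannot be carried out.
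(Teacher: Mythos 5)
Your overall architecture (Gromov--Hausdorff limit with codimension-$2$ singular set from \cite{Zh2} via the conformal metric, tangent-cone analysis to kill the codimension-$2$ and $-3$ strata, and entropy monotonicity plus the $L^\infty$ bound on the minimizer to get the soliton equation) matches the paper, and your third step is essentially the paper's Claim~\ref{c43} together with Lemmas~\ref{l41}--\ref{l43}. But there is a genuine gap in your first step, which is precisely where the paper does its new work. You assert that the upgrade from $C^\alpha$ to smooth convergence on $\mathcal{R}$ follows ``by applying Perelman's pseudolocality and Shi's estimates directly to the K\"ahler-Ricci flow at the time slices $t_k$,'' with ``no need to reinterpret the metrics as time slices of an auxiliary flow.'' Pseudolocality takes its hypothesis (almost-Euclidean volume ratios, or an almost-Euclidean isoperimetric inequality) at an \emph{initial} time and yields curvature bounds only at \emph{later} times. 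The $C^\alpha$ convergence supplies the volume hypothesis only at the slices $t=t_k$ themselves, so to bound $|Rm(g(t_k))|$ you must verify the hypothesis at some earlier time $t_k-\tau$ and propagate it forward. That backward propagation requires the metrics $g(t)$ for $t\in[t_k-\tau,t_k]$ to be uniformly equivalent, i.e.\ a uniform bound on $\partial_t g=-Ric+g$ --- a Ricci bound that is exactly what is \emph{not} assumed. (In \S 3 this issue is invisible because a soliton generates a self-similar flow, so all time slices are isometric up to scale; here it is the crux.) The paper's remedy is the reparametrized flow $h_k(t)=\phi_{k,t}^*g(t_k+t)$, whose time derivative is $\phi_{k,t}^*(-(Ric+\Hess u))+h_k$ and hence is bounded by (\ref{e43}) --- this is the only place the hypothesis (\ref{e42}) enters the smooth-convergence argument. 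Theorem~\ref{t41} is a pseudolocality statement tailored to $h_k$, and Claim~\ref{c42} uses the derivative bound (\ref{e48}) to transfer the time-$0$ volume condition backward over the parabolic neighborhood before invoking it. Your sketch skips all of this, so as written the smooth convergence on $\mathcal{R}$ is not established.

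Two smaller points. In your tangent-cone step the appeal to the bound $C_3$ from (\ref{e43}) is unnecessary: by the very definition (\ref{e41}) of the Ricci potential one has $R_{p\bar q}+\partial_p\partial_{\bar q}u=g_{p\bar q}$ exactly, so the curvature form of the determinant bundle is again $\rho_j^2$ times the rescaled metric and the computation of Proposition~\ref{p31} carries over verbatim. In your entropy argument, the displayed ``total defect'' $\int_0^\infty\int_M|\cdot|^2e^{-f(t)}\,dv\,dt$ with $f(t)$ the minimizer at each time is not what Perelman's monotonicity gives: the derivative formula holds for a solution of the backward conjugate heat equation coupled to the flow, not for the time-dependent minimizer. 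The correct statement, as in the paper, is to solve (\ref{e440}) backward on each $[t_k-\tau_\rho,t_k]$ from the minimizer at $t_k$ and bound the defect on that interval by $\mu(g(t_k),\tfrac12)-\mu(g(t_k-\tau_\rho),\tfrac12)\to 0$. This is a fixable imprecision, unlike the pseudolocality issue above.
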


Inspired by the work of \cite{Se}, to prove the higher regularity of
$g_\infty$ on $\mathcal{R}$, we need to consider the Ricci flow
$g_k(t)=g(t_k+t)$. For simplicity, we consider the sequence of flows
$h_k(t)=\phi_{k,t}^*g_k(t)$, $t\in[-t_k,\infty)$, instead of
$g_k(t)$ where $\phi_{k,t}$ is a family of diffeomorphisms of $M$
generated by the (real) gradient vector field $-\frac{1}{2}\nabla
u(t_k+t)$, with $\phi_{k,0}=id_M$. By an easy computation, for each
$k$, $h_k(t)$ satisfies the evolution
\begin{equation}\label{e47}
\frac{\partial}{\partial t}h_k(t)=\phi_{k,t}^*(-Ric(g_k(t))-\Hess(u_k(t)))+h_k(t),\hspace{0.3cm}t\in[-t_k,\infty),
\end{equation}
where $u_k(t)=u(t_k+t)$. By (\ref{e43}) and boundedness of $|\nabla
u(t)|$, one gets the uniform bound of derivative of $h_k(t)$:
\begin{equation}\label{e48}
|\frac{\partial}{\partial t}h_k(t)|\leq C_6
\end{equation}
for some $C_6$ independent of $k$ and $t$.

The essential step is to prove the following pseudolocality theorem.

\begin{theorem}\label{t41}
Assume as above. There exist universal constants
$\delta_0,\epsilon_0>0$ independent of $k$ with the following
property. Let $(\bar{x}_0,\bar{t}_0)\in M\times[-t_k,\infty)$ be a
space-time point such that
\begin{equation}\label{e49}
\Vol_{h_k(t)}(B_{h_k(t)}(x,r))\geq(1-\delta_0)\Vol(B(r))
\end{equation}
for all metric ball $B_{h_k(t)}(x,r)\subset
B_{h_k(t)}(\bar{x}_0,r_0)$ with
$t\in[\bar{t}_0,\bar{t}_0+(\epsilon_0r_0)^2]$, where $B(r)$ denotes
the metric ball of radius $r$ in $2n$-Euclidean space and
$\Vol(B(r))$ denotes its volume, then the Riemannian curvature
tensor satisfies
\begin{equation}\label{e410}
|Rm(h_k(t))|(x)\leq (t-\bar{t}_0)^{-1}
\end{equation}
whenever
\begin{equation}\label{e411}
\dist_{h_k(t)}(\bar{x}_0,x)<\epsilon_0r_0\mbox{ and }\bar{t}_0<t\leq\bar{t}_0+(\epsilon_0r_0)^2.
\end{equation}
In particular,
\begin{equation}\label{e412}
|Rm(h_k(t))|(x_0)\leq (t-\bar{t}_0)^{-1},\hspace{0.3cm}\mbox{ for }\bar{t}_0<t\leq \bar{t}_0+(\epsilon_0r_0)^2.
\end{equation}
\end{theorem}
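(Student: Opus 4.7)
The plan is to adapt the contradiction-and-rescaling scheme behind Perelman's pseudolocality theorem \cite{Pe} to the flow $h_k(t)$. Two structural features are crucial: the bound (\ref{e48}), which upon parabolic rescaling forces the time-derivative of the rescaled flow to vanish, and the isometry $\phi_{k,t}:(M,h_k(t))\to(M,g_k(t))$, which allows one to import Shi-type estimates \cite{Shi,Ha95b} and Hamilton-type compactness from the genuine K\"ahler-Ricci flow $g_k(t)=g(t_k+t)$.

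Assume the conclusion fails. Then there exist sequences $\delta_n\to 0$, $\epsilon_n\to 0$, indices $k_n$, and data $(\bar x_n,\bar t_n,r_n)$, $(x_n,t_n)$ violating (\ref{e410}). Performing the standard point-selection argument on the Ricci flow $g_{k_n}$ and transferring back via $\phi_{k_n,\cdot}$, I obtain refined space-time points $(x'_n,t'_n)$ at which the curvature $Q_n:=|Rm(h_{k_n}(t'_n))|(x'_n)\to +\infty$ and $|Rm|$ stays bounded by a fixed multiple of $Q_n$ on a parabolic neighborhood of size $\sim Q_n^{-1/2}$ around $(x'_n,t'_n)$.

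Rescale: set $\tilde g_n(s):=Q_n\,g_{k_n}(t'_n+s/Q_n)$ and $\tilde h_n(s):=Q_n\,h_{k_n}(t'_n+s/Q_n)=\phi_{k_n,t'_n+s/Q_n}^{\ast}\tilde g_n(s)$. The $\tilde g_n$ satisfy the rescaled evolution $\partial_s\tilde g_n=-\Ric(\tilde g_n)+Q_n^{-1}\tilde g_n$ with uniform curvature bounds, so Hamilton-Cheeger-Gromov compactness together with Shi's gradient estimate yields a smooth pointed limit $(M_\infty,g_\infty(s),y_\infty)$ which is an unnormalized Ricci flow with $|Rm(g_\infty)|(y_\infty,0)=1$. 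The generator of $\phi_{k_n,t'_n+s/Q_n}$ with respect to the rescaled time $s$ is $-\tfrac{1}{2Q_n}\nabla u_{k_n}$, whose norm in $\tilde h_n$ is $O(Q_n^{-1/2})\to 0$ by (\ref{e46}); hence in the limit the diffeomorphisms tend to the identity and $h_\infty(s)=g_\infty(s)$. Because (\ref{e48}) rescales to $|\partial_s\tilde h_n|_{\tilde h_n}\le C_6/Q_n\to 0$, the limit $h_\infty$ is $s$-independent, so $g_\infty$ is a stationary unnormalized Ricci flow, hence Ricci-flat.

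The volume hypothesis (\ref{e49}), holding for all sub-balls throughout the time interval, passes to the limit to give $\Vol_{g_\infty}(B(y,r))\ge\omega_{2n}r^{2n}$ for every metric ball. Ricci-flatness together with the Bishop-Gromov comparison forces equality, and the resulting rigidity identifies $(M_\infty,g_\infty)$ with flat $\mathbb R^{2n}$, contradicting $|Rm|(y_\infty)=1$. The statement (\ref{e412}) is the special case $x=\bar x_0$ of (\ref{e410}). The main difficulty throughout is that $h_k$ is not itself a Ricci flow, so the higher-order curvature estimates and the point-selection lemma must all be carried out on the isometric Ricci-flow model $g_k$ and transferred through $\phi_{k,t}$; one also uses the $C^0$ bound $|\nabla u|\le C_1$ from (\ref{e46}) to control how the reference point $\phi_{k,t}(\bar x_0)$ drifts over the parabolic time window.
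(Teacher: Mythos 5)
Your argument follows the same contradiction / point-selection / parabolic-rescaling / Hamilton-compactness / Bishop--Gromov-rigidity scheme as the paper's proof, including the key device of carrying out the blow-up on the genuine K\"ahler-Ricci flow $g_k$ and transferring through the diffeomorphisms $\phi_{k,t}$, whose drift is controlled by $|\nabla u|\leq C_1$. The only (harmless) variation is the step establishing Ricci-flatness of the blow-up limit: the paper lets the uniformly bounded scalar curvature rescale to zero and then invokes $\partial_t R=\triangle R+|Ric|^2$ for the limiting unnormalized Ricci flow, whereas you let the bound (\ref{e48}) rescale to $O(Q_n^{-1})$ so that the limit flow is static and hence Ricci-flat; both are valid.
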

\begin{proof}
The argument is same as that of Theorem 4.2 in \cite{FZZ}. Rescale
the flow $h_k(t)$ such that $r_0=1$. We may assume that under the
rescaling the metric derivative $\frac{\partial}{\partial t}h_k(t)$
still satisfies the bound (\ref{e48}). This is fullfilled if the
initial $r_0$ is less than 1.

Denote by $\overline{M}_k$ the set of space-time points $(x,t)$
satisfying (\ref{e411}) but the curvature $|Rm(h_k(t))|(x)\geq
(t-\bar{t}_0)^{-1}$. Suppose $\overline{M}_k\neq\emptyset$, then as
did in Claim 1 and Claim 2 of \cite{Pe}, one can choose another
space-time point $(\bar{x}_k,\bar{t}_k)\in\overline{M}$ with
$\bar{t}_k<\epsilon_0^2$ and
$\dist_{h_k(\bar{t}_k)}(\bar{x}_0,\bar{x}_k)\leq\frac{1}{10}$ such
that $|Rm(h_k(t))|(x)\leq 4Q_k$ whenever
\begin{equation}\label{e413}
\bar{t}_k-\frac{1}{2n}Q_k^{-1}\leq t\leq\bar{t}_k,\hspace{0.3cm} \dist_{h_k(\bar{t}_k)}(\bar{x}_k,x)\leq\frac{1}{1000n}(Q_k\epsilon_0^2)^{-1/2},
\end{equation}
where $Q_k=|Rm(h_k(\bar{t}_k))|(\bar{x}_k)$. From the metric
derivative bound (\ref{e48}), the space-time point $(x,t)$ with
(\ref{e413}) satisfies $$\dist_{h_k(t)}(\bar{x}_0,x)\leq
(\frac{1}{10}+\frac{1}{1000n}(Q_k\epsilon_0^2)^{-1/2})e^{C_4\epsilon_0^2}\leq\frac{1}{5}e^{C_4\epsilon_0^2}\leq\frac{1}{2}$$
whenever $\epsilon_0$ is chosen small enough.

Suppose the theorem does not hold, then there exist sequences of
positive numbers $r_k>0$, $\epsilon_k\rightarrow 0$,
$\delta_k\rightarrow 0$ and space-time points
$(\bar{x}_k,\bar{t}_k)$ with $\bar{t}_k>-t_k$ such that (\ref{e49})
is fullfilled but (\ref{e410}) is not for all points in
(\ref{e411}). As above we rescale the flow such that the radius
$r_k=1$. Reconstruct the base space-time point
$(\bar{x}_k,\bar{t}_k)$ as in above process and consider the
sequence of rescaled pointed flow
$$\big(B_{h_k(\bar{t}_k)}(\bar{x}_k,\frac{1}{1000n}(Q_k\epsilon_k^2)^{-1/2}),Q_kh(Q_k^{-1}t+\bar{t}_k),\bar{x}_k\big).$$
One should keep in mind that this flow is really a Ricci flow, up to
diffeomorphic actions. Actually, the family of Riemannian manifolds
$$\big(\phi_{k,\bar{t}_k}^{-1}B_{h_k(\bar{t}_k)}(\bar{x}_k,\frac{1}{1000n}(Q_k\epsilon_k^2)^{-1/2}),
Q_k(\phi^{-1}_{k,Q_k^{-1}t+\bar{t}_k})^*h_k(Q_k^{-1}t+\bar{t}_k),\phi_{k,\bar{t}_k}^{-1}(\bar{x}_k)\big)$$
is really part of the rescaled K\"ahler-Ricci flow
$(M,Q_kg_k(Q_k^{-1}t+\bar{t}_k))$ on the time interval
$t\in[-\frac{1}{2n},0]$. Note that by construction the curvature of
this sequence is uniformly bounded (less than $4$) and the radius of
the ball equals $\frac{\epsilon_k^{-1}}{1000n}$ which tends to
$\infty$ as $k\rightarrow\infty$. Thus, by Hamilton's compactness
theorem for Ricci flow \cite{Ha95a}, this sequence will converge
along a subsequence to another pointed K\"ahler-Ricci flow
$(M_\infty,\bar{g}_\infty(t),x_\infty)$ on the time interval
$(-\frac{1}{2n},0]$. The curvature
$|Rm(\bar{g}_\infty(0))|(x_\infty)=1$; furthermore, by (\ref{e49})
the volume growth has Euclidean lower bound
$$\Vol_{\bar{g}_\infty(0)}(B(x_\infty,r))\geq\Vol(B(r)),\hspace{0.3cm}\forall r>0.$$

Next we will show that $\bar{g}_\infty(0)$ is Ricci flat. Since
$Q_kg_k(Q_k^{-1}t+\bar{t}_k)\rightarrow\bar{g}_\infty(t)$ smoothly
and the scalar curvature of $Q_kg_k(Q_k^{-1}t+\bar{t}_k)$ tends to
zero, the scalar curvature of $\bar{g}_\infty(t)$ vanishes
identically. Thus,
$$\frac{\partial}{\partial t}\bar{g}_\infty(t)=-Ric(\bar{g}_\infty(t)).$$
Under this evolution, the scalar curvature satisfies
$$\frac{\partial}{\partial t}R(\bar{g}_\infty(t))=\triangle R(\bar{g}_\infty(t))+|Ric(\bar{g}_\infty(t))|^2.$$
It follows immediately that $Ric(\bar{g}_\infty(t))\equiv 0$ for $t\in(-\frac{1}{2n},0]$.

Together with Bishop-Gromov volume comparison theorem the volume
growth condition implies that $\bar{g}_\infty(0)$ is actually flat.
This contradicts with $|Rm(\bar{g}_\infty(0))|(x_\infty)=1$. The
contradiction proves the theorem.
\end{proof}

As direct consequences, $g_\infty$ is K\"ahler and, by the argument in \S 3, the singular set $\mathcal{S}$ has codimension at least 4.

It is time to claim the smooth convergence on $\mathcal{R}$. By
$C^\alpha$ convergence, there exist embeddings
$\psi_k:\mathcal{R}\rightarrow M$ such that
$\psi_k^*g_k\stackrel{C^\alpha}{\longrightarrow}g_\infty$ uniformly
on any compact subsets of $\mathcal{R}$.

\begin{claim}\label{c42}
Adjusting the embeddings $\psi_k$ if necessary we have $\psi_k^*g_k\stackrel{C^\infty}{\longrightarrow}g_\infty$ on $\mathcal{R}$.
\end{claim}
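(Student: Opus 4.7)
The plan is to combine the pseudolocality theorem (Theorem~\ref{t41}) with Shi's interior derivative estimates to obtain uniform $C^\infty$ bounds on $Rm(g_k)$ along $\psi_k(\mathcal{R})$, and then to use harmonic coordinates to promote $C^\alpha$ convergence into $C^\infty$ convergence after a suitable readjustment of the embeddings.

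Fix a relatively compact subset $K\subset\mathcal{R}$ and a radius $\eta>0$ so that the $4\eta$-neighborhood of $K$ still lies in $\mathcal{R}$. Since $g_\infty$ is $C^\alpha$-close to a Euclidean metric on small geodesic balls of $\mathcal{R}$, the volume ratio at every $y\in K$ exceeds $1-\delta_0/4$ on balls of radius $\leq\eta$, where $\delta_0$ is the constant from Theorem~\ref{t41}. By the $C^\alpha$ convergence of $\psi_k^{\ast}h_k(0)=\psi_k^\ast g_k$ to $g_\infty$, this volume lower bound transfers (with a slightly worse constant $\delta_0/2$) to $h_k(0)$-balls near $\psi_k(y)$ for all large $k$. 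The uniform time-derivative bound (\ref{e48}) then makes the metrics $h_k(t)$ and $h_k(0)$ bi-Lipschitz with constant $e^{C_6|t|}$; choosing $\tau=(\epsilon_0\eta)^2$ sufficiently small, the volume lower bound persists on the parabolic cylinder $B_{h_k(t)}(\psi_k(y),\eta)$ for $t\in[-\tau,0]$. The hypotheses of Theorem~\ref{t41} are thus met at $\bar t_0=-\tau$, $r_0=\eta$, yielding
\begin{equation*}
|Rm(h_k(0))|(\psi_k(y))\leq \tau^{-1}
\end{equation*}
uniformly in $y\in K$ and $k\gg 1$.

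By varying $y$ and localizing, one in fact obtains a uniform bound on $|Rm(h_k(t))|$ on parabolic cylinders of definite size inside $\psi_k(K)\times[-\tau/2,0]$. Shi's local derivative estimates (\cite{Shi,Ha95b}) then furnish uniform $C^m$ bounds on $Rm(h_k(0))=Rm(g_k)$ along $\psi_k(K)$ for every $m\geq 0$. Combining these with the $C^\alpha$ control of $\psi_k^{\ast}g_k$, harmonic coordinates on $\mathcal{R}$ give a modification $\tilde\psi_k$ of $\psi_k$, by diffeomorphisms converging to the identity in $C^{1,\alpha}$, in which $\tilde\psi_k^{\ast}g_k$ is uniformly bounded in every $C^{m,\alpha}$ norm. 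A diagonal Arzelà--Ascoli extraction then produces $\tilde\psi_k^{\ast}g_k\to g_\infty$ in $C^\infty_{\mathrm{loc}}(\mathcal{R})$, as required.

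The main obstacle is verifying the non-collapsing hypothesis of Theorem~\ref{t41} \emph{simultaneously} over an entire parabolic cylinder, not merely at the center point and the single time $t=0$. This is where working with the flow $h_k(t)=\phi_{k,t}^{\ast}g_k(t)$, rather than with $g_k(t)$ directly, is essential: the bounded metric time-derivative (\ref{e48}) allows the Euclidean volume comparison, which is immediate at $t=0$ from the $C^\alpha$ convergence of $g_k$, to be propagated backward to $t=-\tau$ so that the pseudolocality theorem applies.
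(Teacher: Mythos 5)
Your argument is correct and follows essentially the same route as the paper: you verify the non-collapsing hypothesis of Theorem~\ref{t41} on a backward parabolic cylinder by propagating the time-$0$ volume lower bound (which comes from the $C^\alpha$ convergence of $\psi_k^*g_k$ to $g_\infty$) backward in time via the uniform bound (\ref{e48}) on $\frac{\partial}{\partial t}h_k$, then apply pseudolocality and Shi's derivative estimates. The only difference is cosmetic: the paper concludes via Hamilton's compactness theorem for Ricci flow rather than harmonic coordinates plus Arzel\`a--Ascoli, both of which are legitimate once the uniform curvature bounds are in hand.
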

\begin{proof}
By the uniqueness of Gromov-Hausdorff limit, it suffices to show
that for any
$K_\rho=\{x\in\mathcal{R}|\dist(x,\mathcal{S})\geq\rho\}$ with
$\rho>0$, the metric $g_k$ is $C^\infty$ uniformly bounded on any
$\psi_k(K_\rho)$ whenever $k$ is large enough.

Let $\epsilon_0,\delta_0$ be the constants given in Theorem
\ref{t41}. Fix one $\rho>0$. Then by the $C^{\alpha}$ convergence of
$g_k$ on $\mathcal{R}$, there exists $r_\rho\leq\frac{1}{2}\rho$
such that
\begin{equation}\label{e414}
\Vol_{g_k}(B_{g_k}(x,r))\geq(1-\frac{1}{2}\delta_0)\Vol(B(r)),\hspace{0.3cm}\forall  B_{g_k}(x,r)\subset K_{\rho-r_\rho},
\end{equation}
whenever $k$ is large enough and that
\begin{equation}\label{e415}
e^{-2nC_4(\epsilon_0r_\rho)^2}(1-\frac{1}{2}\delta_0)\geq 1-\delta_0,\hspace{0.3cm}\frac{1}{2}e^{C_4(\epsilon_0r_\rho)^2}\leq 1.
\end{equation}
Fix any $x_0\in\psi_k(K_\rho)$. By the metric derivative estimate (\ref{e48}), we have
\begin{equation}\label{e415.5}
|\frac{d}{dt}\log\dist_{h_k(t)}(p,q)|\leq C_4
\end{equation}
for all $p,q\in M$ and
\begin{equation}\label{e415.75}
|\frac{d}{dt}\log\Vol_{h_k(t)}(U)|\leq\sup|tr_{h(t)}\frac{\partial h(t)}{\partial t}|\leq nC_4
\end{equation}
for any domain $U\subset M$. From these estimates one derives in particular that
\begin{eqnarray}
\Vol_{h_k(t)}(B_{h_k(t)}(x,r))&\geq&\Vol_{h_k(t)}(B_{g_k}(x,e^{-C_4(\bar{t}_k-t)}r))\nonumber\\
&\geq& e^{-nC_4(\epsilon_0r_\rho)^2}\Vol_{g_k}(B_{g_k}(x,e^{-C_4(\bar{t}_k-t)}r))\nonumber\\
&\geq&e^{-nC_4(\epsilon_0r_\rho)^2}(1-\frac{1}{2}\delta_0)\Vol(B(x,e^{-C_4(\bar{t}_k-t)}r))\nonumber\\
&\geq&e^{-2nC_4(\epsilon_0r_\rho)^2}(1-\frac{1}{2}\delta_0)\Vol(B(x,r))\nonumber\\
&\geq&(1-\delta_0)\Vol(B(x,r))\nonumber
\end{eqnarray}
for all
\begin{equation}\nonumber
-(\epsilon_0r_\rho)^2\leq t\leq 0,\hspace{0.3cm}B_{h_k(t)}(x,r)\subset B_{h_k(t)}(x_0,\frac{1}{2}r_\rho).
\end{equation}
By Theorem \ref{t41}, the curvature of $h_k(t)$ satisfies the
bounded
\begin{equation}
|Rm(h_k(t))|(x_0)\leq\frac{1}{4}(\epsilon_0r_\rho)^2,\hspace{0.3cm}\forall x_0\in\psi_k(K_\rho),t\in[-\frac{1}{2}(\epsilon_0r_\rho)^2,0]
\end{equation}
whenever $k$ is large enough. Now recall that
$h_k(t)=\phi_{k,t}^*g_k(t)$ for a family of diffeomorphisms
$\phi_{k,t}$ whose variation $\frac{1}{2}\nabla u(t_k+t)$ is
uniformly bounded. Thus, the curvature of $g_k(t)$ satisfies
\begin{equation}\label{e416}
|Rm(g_k(t))|(x_0)\leq\frac{1}{4}(\epsilon_0r_\rho)^2,\hspace{0.3cm}\forall x_0\in\psi_k(K_{\frac{\rho}{2}}),t\in[-\delta(\epsilon_0r_\rho)^2,0]
\end{equation}
for some uniform $\delta>0$ and any $k$ large enough. The
derivative estimate for the curvature tensor follows directly from
Shi's gradient estimate \cite{Shi}, see also \cite{Ha95b}. The
smooth convergence of $g_k$ follows directly from Hamilton's
compactness theorem for Ricci flow \cite{Ha95a}.

The proof of the claim is completed.
\end{proof}


Finally we show that $g_\infty$ satisfies the K\"ahler-Ricci
soliton equation. We shall apply Perelman's monotonicity of
$\mu(g(t),\frac{1}{2})$ under the K\"ahler-Ricci flow $g(t)$. The
$L^\infty$ estimate to the minimizer of $\mu(g(t),\frac{1}{2})$ will
play an essential role.

Let $f_k$ be a normalized minimizer of $\mu(g_k,\frac{1}{2})$ such that
\begin{equation}\label{e417}
\int e^{-f_k}dv_{g_k}=(2\pi)^n.
\end{equation}
One verifies easily the following variation identity for $f_k$, cf. \cite{Se},
\begin{equation}\label{e418}
2\triangle f_k-|\nabla f_k|^2+f_k=\mu(g_k,\frac{1}{2})-R+2n.
\end{equation}
Denote $v_k=e^{-f_k/2}$, then (\ref{e418}) is equivalent to
\begin{equation}\label{e419}
\triangle v_k=\frac{v_k}{4}\big(R(g_k)-2n-\mu(g_k,\frac{1}{2})-\log v_k\big).
\end{equation}

\begin{lemma}\label{l41}
There is a constant $C_7$ independent of $k$ such that
\begin{equation}\label{e420}
\inf_M f_k\geq -C_7.
\end{equation}
\end{lemma}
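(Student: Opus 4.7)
Since $v_k := e^{-f_k/2} > 0$, the bound $\inf_M f_k \geq -C_7$ is equivalent to the uniform upper bound $\sup_M v_k \leq e^{C_7/2}$, with $v_k$ satisfying the elliptic equation (\ref{e419}). My plan is to obtain this upper bound by Moser iteration, using Perelman's log-Sobolev inequality to tame the nonlinearity $v_k\log v_k$ and a uniform Sobolev inequality to close the iteration. Both tools are available uniformly in $k$. The log-Sobolev inequality
\[
\int w^2\log w\,dv_{g_k}\leq 2\int|\nabla w|^2\,dv_{g_k}+C\int w^2\,dv_{g_k},\qquad\int w^2\,dv_{g_k}=1,
\]
is a direct rewriting of the variational definition of $\mu(g_k,\tfrac12)$, and is uniform in $k$ thanks to the bound on $\mu$ and the bound $|R(g_k)|\leq C_1$ from (\ref{e46}). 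The uniform Sobolev inequality
\[
\|\varphi\|_{L^{2\kappa}(g_k)}^2\leq C_S\bigl(\|\nabla\varphi\|_{L^2(g_k)}^2+\|\varphi\|_{L^2(g_k)}^2\bigr),\qquad\kappa=\tfrac{n}{n-1},
\]
holds because the conformal metric $\tilde g_k=e^{-u(t_k)/(n-1)}g_k$ has a two-sided Ricci curvature bound by (\ref{e45}) and is uniformly equivalent to $g_k$ by (\ref{e46}); a Gallot type Sobolev estimate then yields $C_S$ independent of $k$.

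For the iteration step, I multiply (\ref{e419}) by $v_k^{p-1}$ with $p\geq 2$ and integrate by parts, obtaining
\[
(p-1)\int v_k^{p-2}|\nabla v_k|^2\,dv=\tfrac14\int v_k^{p}\log v_k\,dv-\tfrac14\int v_k^{p}\bigl(R(g_k)-2n-\mu\bigr)\,dv,
\]
where the last integral is dominated by $C\|v_k\|_{p}^{p}$. The harmful first integral is controlled by applying the log-Sobolev inequality to the normalized function $v_k^{p/2}/\|v_k^{p/2}\|_{L^2}$ and unscaling, which yields
\[
\int v_k^{p}\log v_k\,dv\leq p\int v_k^{p-2}|\nabla v_k|^2\,dv+\|v_k\|_{p}^{p}\log\|v_k\|_{p}+C\|v_k\|_{p}^{p}.
\]
Substituting back and using $(p-1)-p/4\geq p/4$ for $p\geq 2$, the gradient term on the right is absorbed into the left, and after feeding the resulting bound on $\int|\nabla v_k^{p/2}|^2$ into the Sobolev inequality applied to $\varphi=v_k^{p/2}$, one obtains the recurrence
\[
\|v_k\|_{p\kappa}^{p}\leq Cp\,\|v_k\|_{p}^{p}\bigl(1+\bigl|\log\|v_k\|_{p}\bigr|\bigr).
\]

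To close the iteration, I start from $\|v_k\|_{L^2}=(2\pi)^{n/2}$, which follows from the normalization (\ref{e417}), and iterate along $p_j=2\kappa^{j}$. Writing $b_j=\log\|v_k\|_{p_j}$, the recurrence becomes
\[
b_{j+1}\leq b_j+\tfrac{1}{p_j}\bigl(\log(Cp_j)+\log(1+|b_j|)\bigr).
\]
Since both $\sum p_j^{-1}$ and $\sum p_j^{-1}\log(Cp_j)$ are summable, a routine induction shows that $|b_j|\leq M$ for all $j$, once $M$ is chosen so large that $b_0+\sum p_j^{-1}\log(Cp_j)+\log(1+M)\sum p_j^{-1}\leq M$; such $M$ exists because $\log(1+M)=o(M)$. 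Letting $j\to\infty$ gives $\|v_k\|_{L^\infty}\leq e^M$, equivalently $\inf_M f_k\geq -2M$.

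The main obstacle is the nonlinear $v_k\log v_k$ term, which would wreck a naive Moser iteration; this is precisely what Perelman's log-Sobolev inequality is designed to control, and the coefficient $p$ it produces on the gradient side is balanced against the $p-1$ from integration by parts. A secondary technical point is the uniform validity of the Sobolev inequality along the K\"ahler-Ricci flow, which I handle through the conformal trick $\tilde g_k=e^{-u/(n-1)}g_k$ together with Perelman's estimates (\ref{e46}).
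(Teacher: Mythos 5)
Your proof is correct and follows essentially the same route the paper takes (and merely sketches, citing Rothaus and the uniform Sobolev inequality of Ye and Zhang): a Moser iteration on $v_k=e^{-f_k/2}$ in which the $v\log v$ nonlinearity is absorbed via the log-Sobolev inequality coming from the lower bound on $\mu(g_k,\tfrac12)$ and the uniform bound on $R(g_k)$. The only cosmetic difference is that you derive the uniform Sobolev constant from the conformal metric $\tilde g_k$ and a Gallot-type estimate rather than quoting Ye--Zhang, and your closing induction should note the a priori lower bound $b_j\geq\mathrm{const}$ (from $\|v_k\|_{L^2}=(2\pi)^{n/2}$ and H\"older with the fixed volume) so that $\log(1+|b_j|)$ is genuinely controlled; both points are routine.
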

\begin{proof}
It suffices to prove a uniform upper bound of $v_k$. It is pointed
out by Rothaus \cite{Ro} that, $\sup_M v_k$ admits a universal upper
bound by an easy iteration argument, using that $R(g_k)$ and
$\mu(g_k,\frac{1}{2})$ are both uniformly bounded. We just mention
that, according to the independent work of Ye \cite{Ye} and Zhang
\cite{ZhQ}, the Sobolev constant under the K\"ahler-Ricci flow has a
uniform bound. In particular,
\begin{equation}\label{e421}
\big(\int_M\phi^{\frac{2n}{n-1}}dv_{g_k}\big)^{\frac{n-1}{n}}\leq C_8\int_M(|\nabla\phi|^2+\phi^2)dv_{g_k},\hspace{0.3cm}\forall\phi\in C^\infty(M),
\end{equation}
for a uniform constant $C_8$. Thus, the iteration process works
uniformly for all $k$. We will not give the details here.
\end{proof}


The uniform upper bound of $f_k$ is more interesting. The following
proof relies on the Sobolev inequality and the Poincar\'e inequality
under the K\"ahler-Ricci flow. The first step is to show an $L^2$
estimate of $f_k$.

\begin{lemma}\label{l42}
There exists $C_9$ independent of $k$ such that
\begin{equation}\label{e422}
\int_Mf_k^2dv_{g_k}\leq C_9.
\end{equation}
\end{lemma}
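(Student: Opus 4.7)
My strategy is to reduce the $L^2$ bound on $f_k$ to a uniform two-sided bound on its average $\bar f_k=\V^{-1}\int_M f_k\,dv_{g_k}$, by combining the uniform Poincar\'e inequality with the normalization $\int_M e^{-f_k}\,dv_{g_k}=(2\pi)^n$. As a first step I would integrate the Euler--Lagrange equation (\ref{e418}) over $M$: since $\int_M\triangle f_k\,dv_{g_k}=0$, the numbers $\mu(g_k,\tfrac12)$ and $\V$ are uniformly bounded, and $\int_M R\,dv_{g_k}$ is a topological constant determined by the K\"ahler class (which is fixed along the flow), this yields
\[
\int_M f_k\,dv_{g_k}-\int_M|\nabla f_k|^2\,dv_{g_k}=C_{10},\qquad |C_{10}|\le C,
\]
equivalently $\int_M|\nabla f_k|^2\,dv_{g_k}=\V\bar f_k-C_{10}$.

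The uniform Sobolev inequality (\ref{e421}), together with the uniform volume, gives a uniform Poincar\'e constant $C_P$, so that
\[
\int_M(f_k-\bar f_k)^2\,dv_{g_k}\le C_P\int_M|\nabla f_k|^2\,dv_{g_k}.
\]
Writing $\int_M f_k^2\,dv_{g_k}=\V\bar f_k^2+\int_M(f_k-\bar f_k)^2\,dv_{g_k}$ and plugging in the identity from the first step, the problem reduces to a uniform bound on $|\bar f_k|$. The lower bound $\bar f_k\ge -C_7$ is immediate from Lemma \ref{l41}. For the upper bound I would argue by contradiction: if $\bar f_k$ is very large, then the first step gives $\int|\nabla f_k|^2\le 2\V\bar f_k$, Poincar\'e gives $\int(f_k-\bar f_k)^2\le 2C_P\V\bar f_k$, and Chebyshev gives
\[
\Vol\{f_k<\bar f_k/2\}\le\frac{8C_P\V}{\bar f_k}.
\]
Splitting the normalization over $\{f_k\ge\bar f_k/2\}$ (on which $e^{-f_k}\le e^{-\bar f_k/2}$) and its complement (on which $e^{-f_k}\le e^{C_7}$ by Lemma \ref{l41}, and which has volume at most $8C_P\V/\bar f_k$), one obtains
\[
(2\pi)^n\le\V e^{-\bar f_k/2}+\frac{8C_P\V e^{C_7}}{\bar f_k},
\]
whose right-hand side tends to $0$ as $\bar f_k\to\infty$. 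This forces a uniform bound $\bar f_k\le C^\ast$, and substituting the two-sided bound on $\bar f_k$ into the Poincar\'e estimate produces the desired $C_9$.

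The main obstacle is the upper bound on $\bar f_k$: only a gradient estimate of order $\bar f_k$ is available, so the Chebyshev reduction is quantitatively tight and must balance the decay of $e^{-\bar f_k/2}$ against the volume of the exceptional set. It is crucial that Lemma \ref{l41} supplies the pointwise lower bound $f_k\ge -C_7$, which prevents $\{f_k<\bar f_k/2\}$ from giving a large contribution to the normalization integral. Without the pointwise lower bound from Lemma \ref{l41} (or equivalently without the $L^\infty$ bound on $v_k$), the contradiction step would fail.
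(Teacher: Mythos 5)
There is a genuine gap at the central step of your argument: the claim that the uniform Sobolev inequality (\ref{e421}) ``together with the uniform volume, gives a uniform Poincar\'e constant $C_P$'' is false. A Sobolev inequality of the form $\|\phi\|_{2n/(n-1)}^2\le C_8(\|\nabla\phi\|_2^2+\|\phi\|_2^2)$ carries no spectral-gap information: a dumbbell (two unit balls joined by a thin neck) satisfies such an inequality with a uniform constant and uniform volume bounds, yet its first nonzero eigenvalue, hence its Poincar\'e constant, degenerates as the neck shrinks. Since every subsequent step of your proof --- the estimate $\int_M(f_k-\bar f_k)^2\,dv\le C_P\int_M|\nabla f_k|^2\,dv$, the Chebyshev bound $\Vol\{f_k<\bar f_k/2\}\le 8C_P\V/\bar f_k$, and the final decomposition $\int f_k^2=\V\bar f_k^2+\int(f_k-\bar f_k)^2$ --- rests on this $C_P$, the argument as written does not close. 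The inequality you need is in fact available, but from a different source: Futaki's weighted Poincar\'e inequality (\ref{e425}) for the measure $d\mu_k=(2\pi)^{-n}e^{-u_k}dv_{g_k}$, valid because $R_{i\bar j}+\nabla_i\nabla_{\bar j}u_k=g_{i\bar j}$, combined with Perelman's uniform bound $\|u_k\|_{C^0}\le C_1$ from (\ref{e46}) to pass between $d\mu_k$ and $dv_{g_k}$. This is exactly the route the paper takes, and with that substitution your proof can be repaired.

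Apart from this, your treatment of the mean term is correct and genuinely different from the paper's. The paper never bounds $\bar f_k$ separately; it keeps the term $\bigl(\int|f_k|\,d\mu_k\bigr)^2$ and absorbs the part supported on $\{f_k>A\}$ into $\int f_k^2\,d\mu_k$ using the volume lower bound $\Vol_{g_k}(\{f_k\le A\})\ge e^{-C_7}$ (which forces $\int_{\{f_k>A\}}d\mu_k\le 1-(2\pi)^{-n}e^{-C_1-C_7}<1$), ending with a quadratic inequality $X^2\le CX+C'$ for $X=\bigl(\int f_k^2\,dv\bigr)^{1/2}$. Your Chebyshev argument --- playing the decay of $e^{-\bar f_k/2}$ against the volume of $\{f_k<\bar f_k/2\}$ in the normalization $\int e^{-f_k}dv=(2\pi)^n$ --- gives an explicit upper bound on $\bar f_k$ and is, if anything, more transparent; both arguments use Lemma \ref{l41} and the normalization in an essential way. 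But fix the provenance of the Poincar\'e inequality first.
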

\begin{proof}
Integrating the identity (\ref{e418}) over $M$ and by the bound of $\mu(g_k,\frac{1}{2})$ and $R$ from (\ref{e46}) we get the estimate
\begin{eqnarray}
\int_M|\nabla f_k|^2dv_{g_k}&=&\int_M(f_k+R-\mu(g_k,\frac{1}{2})-2n)dv_{g_k}\nonumber\\
&\leq&\int_Mf_kdv_{g_k}+2C_1\V.\label{e423}
\end{eqnarray}

Another observation is that from the normalization $\int_Me^{-f_k}dv_{g_k}=(2\pi)^n$,
\begin{equation}\label{e424}
\Vol_{g_k}(\{f_k\leq\log \V\})\geq e^{-C_7}
\end{equation}
where $C_7$ is the constant in (\ref{e420}).

Denote the unit measure $d\mu_k=(2\pi)^{-n}e^{-u_k}dv_{g_k}$ where
$u_k=u(t_k)$ is the normalized Ricci potential of $g_k$. Recall the
Poincar\'e inequality on a K\"ahler manifold which satisfies
(\ref{e41}), cf. \cite{Fu},
\begin{equation}\label{e425}
\int_M\phi^2d\mu_k\leq\int_M|\nabla\phi|^2d\mu_k+\big(\int_M\phi d\mu_k\big)^2,\hspace{0.3cm}\forall \phi\in C^\infty(M).
\end{equation}
Set $A=\max(\log \V,C_7)$. Substituting $f_k$ into the Poincar\'e inequality gives the estimate
\begin{eqnarray}
\int_M|\nabla f_k|^2d\mu_k&\geq&\int_Mf_k^2d\mu_k-\big(\int_M|f_k|d\mu_k\big)^2\nonumber\\
&\geq&\int_Mf_k^2d\mu_k-\big(\int_{\{f_k>A\}}|f_k|d\mu_k+A\V\big)^2\nonumber\\
&\geq&\int_Mf_k^2d\mu_k-\big(\int_{\{f_k>A\}}|f_k|d\mu_k\big)^2-2A\V\int_M|f_k|d\mu_k-A^2\V^2\label{e426}.
\end{eqnarray}
By Schwarz inequality
\begin{equation}\label{e427}
\big(\int_{\{f_k>A\}}|f_k|d\mu_k\big)^2\leq\int_{\{f_k>A\}}f_k^2d\mu_k\cdot\int_{\{f_k>A\}}d\mu_k
\end{equation}
where $\int_{\{f_k>A\}}d\mu_k$ can be estimated as follows
\begin{eqnarray}
\int_{\{f_k>A\}}d\mu_k&=&1-\int_{\{f\leq A\}}(2\pi)^{-n}e^{-u_k}dv_{g_k}\nonumber\\
&\leq& 1-(2\pi)^{-n}e^{-C_1}\Vol_{g_k}(\{f_k\leq A\})\nonumber\\
&\leq&1-(2\pi)^{-n}e^{-C_1-C_7}.\label{e428}
\end{eqnarray}
Plugging the estimate of $\int_M|\nabla f_k|^2d\mu_k$ into (\ref{e426}) gives
\begin{equation}\label{e429}
\int_M|\nabla f_k|^2d\mu_k\geq(2\pi)^{-n}e^{-C_1-C_7}\int_Mf_k^2d\mu_k-2A\V\int_M|f_k|d\mu_k-A^2\V^2.
\end{equation}

Let $C_{10}=(2\pi)^ne^{C_1+C_7}$. Then, combining with (\ref{e425}) and (\ref{e429}) yields
\begin{eqnarray}
\int_Mf_k^2dv_{g_k}&\leq&C_{10}\int_M|\nabla f_k|^2dv_{g_k}+2A\V C_{10}\int_M|f_k|dv_{g_k}+A^2\V^2C_{10}\nonumber\\
&\leq&C_{10}(2A\V+1)\int_M|f_k|dv_{g_k}+C_{10}(A^2\V^2+2C_1\V)\nonumber\\
&\leq&C_{10}(2A\V+1)\V^{1/2}\big(\int_Mf_k^2dv_{g_k}\big)^{1/2}+C_{10}(A^2\V^2+2C_1\V)\label{e430}.
\end{eqnarray}
One can derive easily a uniform upper bound of $\int_Mf_k^2dv_{g_k}$.
\end{proof}

The $L^\infty$ bound of $f_k$ follows from a standard Moser's iteration argument.

\begin{lemma}\label{l43}
There exists $C_{11}$ independent of $k$ such that
\begin{equation}\label{e432}
\sup f_k\leq C_{11}.
\end{equation}
\end{lemma}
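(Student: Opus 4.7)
The plan is to run Moser iteration on equation \eqref{e418}, using as inputs the $L^2$-bound of Lemma \ref{l42}, the uniform Sobolev inequality \eqref{e421}, and the lower bound $f_k\geq -C_7$ from Lemma \ref{l41}. The outcome $\sup f_k\le C_{11}$ follows at once from the resulting $L^2\to L^\infty$ improvement.

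First I would shift $g_k=f_k+C_7+1\ge 1$, so \eqref{e418} becomes
\begin{equation*}
2\triangle g_k-|\nabla g_k|^2+g_k=\tilde C_k,\qquad |\tilde C_k|\le C,
\end{equation*}
where the uniform bound on $\tilde C_k$ comes from \eqref{e46} and the uniform bound on $\mu(g(t),\tfrac12)$. For each $p\ge 2$ I would multiply this identity by $g_k^{p-1}\ge 0$ and integrate; a single integration by parts on $\int(\triangle g_k)g_k^{p-1}dv_{g_k}$ gives
\begin{equation*}
2(p-1)\int g_k^{p-2}|\nabla g_k|^2\,dv_{g_k}+\int g_k^{p-1}|\nabla g_k|^2\,dv_{g_k}=\int g_k^p\,dv_{g_k}-\int\tilde C_k\,g_k^{p-1}\,dv_{g_k}.
\end{equation*}
Dropping the nonnegative second term on the left, writing $g_k^{p-2}|\nabla g_k|^2=4p^{-2}|\nabla g_k^{p/2}|^2$, and using $|\tilde C_k|\le C$ together with $g_k\ge 1$ (so $\int g_k^{p-1}\le\int g_k^p$), one obtains $\int|\nabla g_k^{p/2}|^2\,dv_{g_k}\le Cp\int g_k^p\,dv_{g_k}$.

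Plugging $\phi=g_k^{p/2}$ into the uniform Sobolev inequality \eqref{e421} and combining with the preceding estimate yields the recursive bound
\begin{equation*}
\|g_k\|_{L^{p\chi}(M,g_k)}\le (Cp)^{1/p}\|g_k\|_{L^p(M,g_k)},\qquad \chi=\frac{n}{n-1}.
\end{equation*}
Iterating with $p_j=2\chi^j$, the prefactor $\prod_j(Cp_j)^{1/p_j}$ converges because $\sum_j p_j^{-1}\log(Cp_j)$ is a convergent geometric series, so $\|g_k\|_{L^\infty}\le C'\|g_k\|_{L^2}$. The $L^2$-bound of Lemma \ref{l42}, together with the fixed shift and the fixed volume $\V$, makes $\|g_k\|_{L^2}$ uniform in $k$, and the claim follows.

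The only real subtlety is the sign of the gradient term $-|\nabla f_k|^2$ in \eqref{e418}. Naively it looks destructive, but multiplied by $g_k^{p-1}\ge 0$ and integrated it simply contributes a positive quantity on the same side as the Dirichlet energy, so it can be discarded. The driving term on the right becomes $\int g_k^p$, which is exactly what one wants for Moser iteration; the lower-order $\int g_k^{p-1}$ piece is painlessly absorbed thanks to $g_k\ge 1$. Everything else is the standard Moser machine, entirely parallel to the iteration used by Rothaus \cite{Ro} in the proof of Lemma \ref{l41}.
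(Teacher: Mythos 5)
Your argument is correct and is essentially the paper's own proof: the same shift $f_k\mapsto f_k+C_7+1\ge 1$, the same multiplication of \eqref{e418} by the $(p-1)$-st power followed by one integration by parts, discarding the nonnegative gradient term, the same reverse Sobolev/Moser iteration with $p_i=2(\tfrac{n}{n-1})^i$ via \eqref{e421}, closed by the $L^2$ bound of Lemma \ref{l42}. The only (cosmetic) issue is your reuse of the symbol $g_k$ for the shifted function, which collides with the metric $g_k$; otherwise nothing to add.
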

\begin{proof}
Let $\tilde{f}_k=f_k+C_7+1$ for $C_7$ in (\ref{e420}). It suffices to show a uniform upper bound of $\tilde{f}_k$. Obviously from (\ref{e422}),
\begin{equation}\label{e433}
2\triangle\tilde{f}_k-|\nabla\tilde{f}_k|^2+\tilde{f}_k=\mu_k-R+2n+C_7+1,
\end{equation}
where $\mu_k=\mu(g_k,\frac{1}{2})$. Notice that $\tilde{f}_k\geq 1$.
Multiplying $\tilde{f}_k^{p-1}$ for any $p>1$ onto above identity
and integrating by parts, we get
\begin{eqnarray}
\int_M(\mu_k-R+2n+C_7+1)\tilde{f}_k^{p-1}dv_{g_k}
&=&\int_M(2\triangle\tilde{f}_k-|\nabla\tilde{f}_k|^2+\tilde{f}_k)\tilde{f}_k^{p-1}dv_{g_k}\nonumber\\
&\leq&\int_M(2\tilde{f}_k^{p-1}\triangle\tilde{f}_k+\tilde{f}_k^p)dv_{g_k}\nonumber\\
&=&-\frac{8(p-1)}{p^2}\int_M|\nabla\tilde{f}^{p/2}|^2dv_{g_k}+\int\tilde{f}_k^pdv_{g_k}\label{e434}.
\end{eqnarray}
Rearranging the terms and using the bound of $\mu_k$ and $R$ by (\ref{e46}),
\begin{equation}\label{e435}
\int_M|\nabla\tilde{f}^{p/2}|^2dv_{g_k}\leq pC_{12}\int_M\tilde{f}_k^pdv_{g_k},\hspace{0.3cm}\forall p\geq 2,
\end{equation}
where $C_{12}$ is a constant independent of $k$ and $p\geq 2$.

Define a sequence of positive numbers $p_i=2\cdot(\frac{n}{n-1})^i$,
$i=0,1,2,\cdots,$ and apply the inequality (\ref{e435}) to each
$p_i$. Applying the Sobolev inequality (\ref{e421}) to $\tilde{f}_k$
we obtain
\begin{eqnarray}
\big(\int_M\tilde{f}_k^{\frac{np_i}{n-1}}dv_{g_k}\big)^{\frac{n-1}{n}}&\leq&C_8\big(\int_M|\nabla\tilde{f}_k^{\frac{p_i}{2}}|^2+\tilde{f}_k^{p_i}\big)dv_{g_k}\nonumber\\
&\leq&p_iC_{13}\int_M\tilde{f}_k^{p_i}dv_{g_k}\label{e436}
\end{eqnarray}
where $C_{13}$ is a uniform constant independent of $k$ and $i$. It implies,
\begin{equation}\label{e437}
\|\tilde{f}_k\|_{L^{p_{i+1}}}\leq p_i^{\frac{1}{p_i}}(2C_{13})^{\frac{1}{p_i}}\|\tilde{f}_k\|_{L^{p_i}},\hspace{0.3cm}\forall i\geq 0.
\end{equation}
An interation argument yields the estimate
\begin{equation}\label{e438}
\|\tilde{f}_k\|_{L^\infty}\leq C_{14}\int_M\tilde{f}_k^2dv_{g_k},
\end{equation}
for a uniform constant $C_{14}$. Combining with the $L^2$ estimate of $f_k$ in above lemma gives the desired upper bound of $\tilde{f}_k$ in this case.
\end{proof}

Let $g_k(t)=g(t_k+t)$ be the sequence of K\"ahler-Ricci flow as before and $f_k(t)$ be the associated solution to the backward heat equation
\begin{equation}\label{e440}
\frac{\partial}{\partial t}f_k=-\triangle f_k+|\nabla f_k|^2-R+n
\end{equation}
with initial value at $0$ the minimizer $f_k$. Then the maximal
principle derives the two-sided bound
\begin{equation}\label{e441}
\sup |f_k(t)|\leq C_{16},\hspace{0.3cm}\forall -1\leq t\leq 0
\end{equation}
for some uniform constant $C_{16}$.

Now we are ready to show that $g_\infty$ is a K\"ahler-Ricci soliton.

\begin{claim}\label{c43}
$g_\infty$ satisfies the K\"ahler-Ricci soliton equation on $\mathcal{R}$.
\end{claim}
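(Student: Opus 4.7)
The plan is to exploit Perelman's monotonicity of $\mu(g(t),\tfrac{1}{2})$ along the K\"ahler-Ricci flow together with the smooth convergence on $\mathcal{R}\times\{0\}$ established in Claim \ref{c42}. In the K\"ahler setting the derivative formula reads
$$\frac{d}{dt}\mathcal{W}(g(t),f(t),\tfrac{1}{2})=\int_M\bigl(|R_{i\bar j}+\nabla_i\nabla_{\bar j}f-g_{i\bar j}|^2+2|\nabla_i\nabla_jf|^2\bigr)(2\pi)^{-n}e^{-f}\,dv,$$
where $f=f(t)$ is the solution of the backward conjugate heat equation (\ref{e440}) with initial value the minimizer at the final time; the $(2,0)$-term is exactly what witnesses the K\"ahler character of the soliton equation (\ref{e1}). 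Since $\mu(g(t),\tfrac{1}{2})$ is monotone nondecreasing and uniformly bounded, it converges as $t\to\infty$. Applied to the shifted flows $g_k(t)=g(t_k+t)$ and their backward solutions $f_k(t)$, this yields
$$\int_{-1}^{0}\!\int_M\!\bigl(|R_{i\bar j}+\nabla_i\nabla_{\bar j}f_k(t)-g_{i\bar j}|^2+2|\nabla_i\nabla_jf_k(t)|^2\bigr)(2\pi)^{-n}e^{-f_k(t)}\,dv_{g_k(t)}\,dt\longrightarrow 0.$$

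Next I would upgrade the time-zero smooth convergence of $\psi_k^*g_k\to g_\infty$ to smooth convergence on a parabolic neighborhood of $\mathcal{R}$. The pseudolocality Theorem \ref{t41} applies because the almost-Euclidean volume hypothesis (\ref{e49}) at time $0$ propagates for a short time by the metric derivative bound (\ref{e48}); a finite-step iteration covers $[-1,0]$. Shi's derivative estimates and Hamilton's compactness theorem then produce, along a further subsequence, a smooth limiting K\"ahler-Ricci flow $g_\infty(t)$ on $\mathcal{R}\times[-1,0]$ with $g_\infty(0)=g_\infty$. The $L^\infty$ bound (\ref{e441}) combined with the parabolic equation (\ref{e440}) and standard Schauder theory then gives uniform $C^\infty_{\mathrm{loc}}$ bounds for $\psi_k^*f_k(t)$ on compact subsets of $\mathcal{R}\times[-1,0]$, and a diagonal extraction yields $\psi_k^*f_k(t)\to f_\infty(t)$ smoothly there.

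The final step is a Fatou-type passage to the limit. Since the integrand in the monotonicity formula is nonnegative, for every compact $K\subset\mathcal{R}$ the smooth convergence of $\psi_k^*g_k(t)$ and $\psi_k^*f_k(t)$ on $K\times[-1,0]$ gives
$$\int_{-1}^{0}\!\!\int_K\!\!\bigl(|R_{i\bar j}(g_\infty)+\nabla_i\nabla_{\bar j}f_\infty-g_{i\bar j}(g_\infty)|^2+2|\nabla_i\nabla_jf_\infty|^2\bigr)e^{-f_\infty}\,dv_{g_\infty(t)}\,dt=0.$$
As $K$ is arbitrary and everything is smooth, both tensors vanish identically on $\mathcal{R}\times[-1,0]$. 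Evaluating at $t=0$ with potential $f_\infty(0)$ yields exactly the K\"ahler-Ricci soliton system (\ref{e1}) for $g_\infty$ on $\mathcal{R}$, proving the claim.

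The main obstacle I expect is the second step: upgrading the spatial smooth convergence to a parabolic slab, since the monotonicity formula is only useful after integration in time. This hinges critically on hypothesis (\ref{e42}) through the metric derivative bound (\ref{e48}), without which the almost-Euclidean volume condition required by pseudolocality could not be propagated. Once that is achieved, the $L^\infty$ control of the minimizers supplied by Lemmas \ref{l41}--\ref{l43} and its parabolic extension (\ref{e441}) makes the analytic passage to the limit routine.
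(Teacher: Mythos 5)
Your proposal is correct and follows essentially the same route as the paper: Perelman's monotonicity of $\mu(g(t),\tfrac{1}{2})$, smooth subconvergence of the shifted flows and of the backward conjugate heat solutions $f_k(t)$ on compact subsets of $\mathcal{R}$, and a Fatou-type passage to the limit forcing the soliton tensor to vanish. The only (harmless) deviation is your claim of a uniform parabolic slab $[-1,0]$ via iterated pseudolocality, which is neither clearly achievable (the almost-Euclidean volume ratio degrades under backward propagation) nor needed -- the paper works on $K_\rho\times[-\tau_\rho,0]$ with $\tau_\rho$ depending on the distance $\rho$ to the singular set, which suffices since $\mu(g(t_k),\tfrac12)-\mu(g(t_k-\tau_\rho),\tfrac12)\to 0$ for any fixed $\tau_\rho>0$.
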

\begin{proof}
This follows essentially from the monotonicity of Perelman's
entropy functional $\mu(g(t),\frac{1}{2})$ under the original
K\"ahler-Ricci flow $g(t)$ on $M$.

As before, let $f_k$ be one associated minimizer of
$\mu(g_k,\frac{1}{2})$ and $v_k=e^{-f_k/2}$. Let
$K_\rho=\{x\in\mathcal{R}|d(x,\mathcal{S})\geq\rho\}$ for any
$\rho>0$. Then, by (\ref{e416}) and Shi's local gradient estimate
for curvature \cite{Shi, Ha95b}, the geometry of $\psi_k(K_\rho)$ is
$C^\infty$ uniformly bounded whenever $k$ is large enough. The
elliptic regularity theory to (\ref{e419}) gives the $C^\infty$
bound of $v_k$ as well as $f_k$ on $\psi_k(K_\rho)$. Hence, passing
a subsequence and letting $\rho\rightarrow 0$, $f_k$ will converge
smoothly to a function $f_\infty$ on $\mathcal{R}$.

Another implication of (\ref{e416}) is that by Hamilton's
compactness theorem for Ricci flow, for any fixed $\rho$ there
exists $\tau_\rho>0$ such that, $(\psi_k(K_\rho),g_k(t))$ converges
along a subsequence to a K\"ahler-Ricci flow
$(M_i,g_{\rho,\infty}(t))$ on some time interval $[-\tau_\rho,0]$.
Since
$(\psi_k(K_\rho),g(t_k))\stackrel{C^\infty}{\longrightarrow}(K_\rho,g_\infty)$
by Claim \ref{c42}, we may assume that, up to an isometry,
$M_\rho=K_\rho$ and $g_{\rho,\infty}(0)=g_\infty$ on $K_\rho$.

The remaining parts of the proof are standard, cf. \cite{Se, SeTi}:
Let $f_k(t)$ be the solution to (\ref{e440}) on the time interval
$[-\tau_\rho,0]$. By (\ref{e441}), $f_k$ is $C^0$ uniformly bounded
on $M\times[-\tau_\rho,0]$. The regularity theory for (local) heat
equation (\ref{e440}) gives the bound
\begin{eqnarray}
\|\nabla^lf_k\|_{\psi_k(K_\rho)}\leq C_{l,\rho},\hspace{0.3cm}\forall t\in[-\tau_\rho,0]
\end{eqnarray}
whenever $k$ is large enough, where $C_{l,\rho}$ is a uniform
constant depending only on $l$ and $\rho$. In particular, $f_k(t)$
converges along a subsequence to a family of smooth functions
$f_{\rho,\infty}(t)$ on $K_i\times[-\tau_\rho,0]$ with
$f_\infty(0)=f_\infty|_{K_\rho}$.

By Perelman's monotonicity formula \cite{Pe}, also see \cite{SeTi},
\begin{equation}\nonumber
\frac{d}{dt}\mathcal{W}(g(t),f_k(t),\frac{1}{2})=\int_M\big(|R_{i\bar{j}}+\nabla_i\nabla_{\bar{j}}f_k-g_{i\bar{j}}|^2+2|\nabla_i\nabla_jf_k|^2\big)
(2\pi)^{-n}e^{-f_k}dv_{g(t)}.
\end{equation}
In particular, $\mu(g(t),\frac{1}{2})$ is increasing and
\begin{eqnarray}
&&\int_{t_k-\tau_\rho}^{t_k}\int_M\big(|R_{i\bar{j}}+\nabla_i\nabla_{\bar{j}}f_k-g_{i\bar{j}}|^2+2|\nabla_i\nabla_jf_k|^2\big)(2\pi)^{-n}e^{-f_k}dvdt\nonumber\\
&&\leq\mu(g(t_k),\frac{1}{2})-\mu(g(t_k-\tau_\rho),\frac{1}{2}).\nonumber
\end{eqnarray}
By the monotonicity and boundedness of $\mu(g(t),\frac{1}{2})$,
\begin{equation}\nonumber
\mu(g_k(0),\frac{1}{2})-\mu(g_k(-\tau_\rho),\frac{1}{2})=\mu(g(t_k),\frac{1}{2})-\mu(g(t_k-\tau_\rho),\frac{1}{2})\rightarrow 0,\hspace{0.3cm}\mbox{as }k\rightarrow\infty.
\end{equation}
Passing to the limit space $(K_\rho,g_{\rho,\infty})$, this implies that for any $\rho>0$,
\begin{eqnarray}
&&\int_{-\tau_\rho}^{0}\int_{K_\rho}\big(|Ric+\nabla\bar{\nabla}f_\infty-g_{\rho,\infty}|^2+2|\nabla\nabla f_\infty|^2\big)(2\pi)^{-n}dvdt\nonumber\\
&&\leq e^{C_{16}}\int_{-\tau_\rho}^{0}\int_{K_\rho}\big(|Ric+\nabla\bar{\nabla}f_\infty-g_{\rho,\infty}|^2+2|\nabla\nabla f_\infty|^2\big)(2\pi)^{-n}e^{-f_{\rho,\infty}} dvdt\nonumber\\
&&\leq e^{C_{16}}\lim_{k\rightarrow\infty}\int_{-\tau_\rho}^{0}\int_{\psi_k(K_\rho)}\big(|Ric+\nabla\bar{\nabla}f_k-g_k|^2+2|\nabla\nabla f_k|^2\big)(2\pi)^{-n}e^{-f_k}dvdt\nonumber\\
&&\leq e^{C_{16}}\lim_{k\rightarrow\infty}\big(\mu(g_k(0),\frac{1}{2})-\mu(g_k(-\tau_\rho),\frac{1}{2})\big)\nonumber\\
&&=0.\nonumber
\end{eqnarray}
where $C_{16}$ is the constant in (\ref{e441}). That is, the couple $(g_{\rho,\infty},f_{\rho,\infty})$ satisfies
\begin{equation}\label{e442}
\left\{ \begin{array}{ll}
Ric+\nabla\bar{\nabla}f_{\rho,\infty}=g_{\rho,\infty},\\
\nabla\nabla f_{\rho,\infty}=0,
\end{array} \right.
\end{equation}
on the space time $K_\rho\times[-\tau_\rho,0]$. In particular, at time $0$,
\begin{equation}\label{e443}
\left\{ \begin{array}{ll}
Ric+\nabla\bar{\nabla}f_{\infty}=g_{\infty},\\
\nabla\nabla f_{\infty}=0,
\end{array} \right.
\end{equation}
which means that $g_\infty$ satisfies the shrinking K\"ahler-Ricci
soliton equation on each $K_\rho$ with potential $f_\infty$. The
proof then follows from the arbitrariness of $\rho$.
\end{proof}

Summing up Claims \ref{c42} and \ref{c43} proves our theorem \ref{t40}.

We finally show that the limits of $f_k$ and $u_k$ coincide. By the
$C^1$ bound of $u_k=u(t_k)$, cf. (\ref{e46}), the Ricci potentials
$u_k$ will converge along a subsequence to a Lipschitz function
$u_\infty$ on $Y$. Applying the elliptic regular theory to
$\triangle u(t_k)+R(g_k)=n$ shows that $u_\infty$ is actually smooth
on $\mathcal{R}$. It is clear that $u_\infty$ is the Ricci potential
of $g_\infty$:
\begin{equation}\label{e444}
R_{i\bar{j}}(g_\infty)+\nabla_i\nabla_{\bar{j}}u_\infty=g_{\infty,i\bar{j}}.
\end{equation}
We have the following proposition.

\begin{proposition}[Compare Proposition 14 in \cite{SeTi}]
$f_\infty=u_\infty$ on $\mathcal{R}$. In particular, $f_\infty$ can be extended to be a globally Lipschitz function on $Y$.
\end{proposition}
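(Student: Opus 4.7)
The approach is to exploit that, on the regular part $\mathcal{R}$, the first equation of (\ref{e443}) gives $R_{i\bar j}(g_\infty)+\nabla_i\nabla_{\bar j}f_\infty=g_{\infty,i\bar j}$, while (\ref{e444}) is the same equation with $u_\infty$ in place of $f_\infty$. Subtracting, the difference $w:=f_\infty-u_\infty$ satisfies $\nabla_i\nabla_{\bar j}w=0$, so $w$ is pluriharmonic, hence harmonic, on the smooth K\"ahler manifold $\mathcal{R}$. Both functions are uniformly bounded on $\mathcal{R}$: $u_\infty$ inherits the $C^0$-bound from (\ref{e46}), while $f_\infty$ is bounded by combining the two-sided estimates on $f_k$ given by Lemmas \ref{l41} and \ref{l43} with the backward-heat $L^\infty$-propagation (\ref{e441}) and the smooth convergence of $f_k$ on compact subsets of $\mathcal{R}$ obtained inside Claim \ref{c43}.

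The main obstacle is to upgrade the bounded harmonic $w$ to a globally constant function. Because $\mathcal{S}$ has Hausdorff codimension at least $4$ and the conformal metrics $\tilde g_k=e^{-u_k/(n-1)}g_k$ have two-sided Ricci bounds via (\ref{e45}), the convergence $(M,g_k)\to(Y,d)$ sits inside the noncollapsed Cheeger--Colding framework after conformal change. Two ingredients from that theory apply: the regular set $\mathcal{R}$ is connected, and the $2$-capacity of $\mathcal{S}$ in $Y$ vanishes (since its codimension exceeds $2$). Consequently $w$ extends to a harmonic function on the compact metric measure space $(Y,d)$ (interpreted in the Dirichlet-energy sense), and the strong maximum principle forces $w\equiv c$ for a real constant $c$. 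This step is the delicate part of the argument, as it invokes singular elliptic theory on Ricci-limit spaces rather than just the smooth theory on $\mathcal{R}$.

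To pin down $c=0$ I would pass the normalization $\int_M e^{-u_k}dv_{g_k}=\int_M e^{-f_k}dv_{g_k}=(2\pi)^n$ to the limit. The two-sided Ricci bound on $\tilde g_k$ gives weak convergence of the associated volume measures on $Y$, and this transfers to $dv_{g_k}$ through the uniformly controlled conformal factor $e^{-u_k/(n-1)}$; combined with the smooth convergence of $u_k$ and $f_k$ on each exhausting $K_\rho\subset\mathcal{R}$ and dominated convergence (legitimate by the uniform $L^\infty$-bounds), one obtains $\int_{\mathcal R}e^{-f_\infty}dv_{g_\infty}=\int_{\mathcal R}e^{-u_\infty}dv_{g_\infty}$. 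Since $f_\infty=u_\infty+c$, this forces $c=0$, so $f_\infty=u_\infty$ on $\mathcal{R}$. The global Lipschitz extension of $f_\infty$ to $Y$ is then immediate: $u_\infty$ is globally Lipschitz on $Y$ as the uniform limit of the equi-Lipschitz family $u(t_k)$ from (\ref{e46}), so the desired extension is simply $u_\infty$ itself.
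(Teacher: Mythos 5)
Your overall strategy coincides with the paper's: show that $w=f_\infty-u_\infty$ has vanishing gradient on the connected set $\mathcal{R}$, then kill the resulting constant by passing the common normalization $\int e^{-f_k}dv=\int e^{-u_k}dv=(2\pi)^n$ to the limit. The difference lies entirely in how the constancy of $w$ is obtained. The paper stays on the smooth part and argues by a direct cutoff computation: it first derives the soliton identity $R(g_\infty)+|\nabla f_\infty|^2=f_\infty+\mathrm{const.}$ from (\ref{e443}) to get a \emph{global} bound on $|\nabla f_\infty|$, chooses exhausting compact submanifolds $V_i\subset\mathcal{R}$ with $\Vol(\partial V_i)\to 0$ (using that $\mathcal{S}$ has codimension at least $4$), and integrates by parts, using $\triangle u_\infty=\triangle f_\infty=n-R(g_\infty)$ so that only the boundary term survives and tends to zero; hence $\int_{\mathcal{R}}|\nabla w|^2=0$. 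You instead invoke removability of $\mathcal{S}$ for bounded harmonic functions via a $2$-capacity argument plus a strong maximum principle on the compact singular space $Y$. That route is morally sound, but as written it is the weakest link: on a Ricci-limit space one must first set up the Dirichlet form, justify that a Hausdorff-codimension bound on $\mathcal{S}$ actually yields vanishing capacity (this needs Minkowski-type content control near $\mathcal{S}$, not just Hausdorff dimension), and then prove the removability and maximum principle in that setting. The paper's cutoff argument is precisely the elementary, self-contained implementation of this idea --- the condition $\Vol(\partial V_i)\to 0$ is the quantitative content estimate your capacity claim silently requires --- and it needs only the smooth theory on $\mathcal{R}$ together with the a priori $C^0$ and gradient bounds on $u_\infty$ and $f_\infty$. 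Your closing observation that the Lipschitz extension of $f_\infty$ is just $u_\infty$ itself is correct and is also how the paper's statement should be read, though the paper additionally records the intrinsic Lipschitz bound on $f_\infty$ coming from (\ref{e445}), which it needs anyway to control the boundary term in its integration by parts.
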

\begin{proof}
We first show that $f_\infty$ is globally Lipschitz on $\mathcal{R}$
and so admits a natural extention over $Y$. From the K\"ahler-Ricci
soliton equation (\ref{e443}), using second Bianchi identity, one
verifies easily the following identity, cf. \cite{Ha95b} for
example,
\begin{equation}\label{e445}
R(g_\infty)+|\nabla f_\infty|^2=f_\infty+const.,\hspace{0.3cm}\mbox{on }\mathcal{R}.
\end{equation}
We mention that one should apply the connectedness of $\mathcal{R}$
to derive the identity. By Perleman's estimate to $R(g_k)$ and
estimate (\ref{e441}), we have the uniform bound $|R(g_\infty)|\leq
C_1$ and $|f_\infty|\leq C_{16}$ on $\mathcal{R}$ because of the
smooth convergence there. It concludes the uniform bound of $|\nabla
f_\infty|$ on $\mathcal{R}$.

We next show that $\nabla(u_\infty-f_\infty)=0$ on $\mathcal{R}$. Combining with the normalization
\begin{equation}\label{e446}
\int_{\mathcal{R}}e^{-f_\infty}dv_{g_\infty}=\int_{\mathcal{R}}e^{-u_\infty}dv_{g_\infty}=(2\pi)^n
\end{equation}
which follows directly from the limiting process, we conclude that $u_\infty=f_\infty$.

To prove (\ref{e446}), it suffices to show
$\int_{\mathcal{R}}|\nabla(u-f)|^2dv_{g_\infty}=0$.To this aim,
choose a sequence of compact submanifolds $V_i\subset\mathcal{R}$
such that $\cup V_i=\mathcal{R}$ and $\Vol(\partial V_i)\rightarrow
0$ as $i\rightarrow\infty$. This can be done since the boundary
$\partial\mathcal{R}=\mathcal{S}$ has codimension at least 4. Then
take integration by parts,
\begin{eqnarray}
\int_{V_i}|\nabla(u_\infty-f_\infty)|^2dv_{g_\infty}&=&-\int_{\partial V_i}(u_\infty-f_\infty)\langle\nabla(u_\infty-f_\infty),\mu\rangle\nonumber\\
&&-\int_{V_i}(u_\infty-f_\infty)\triangle(u_\infty-f_\infty)dv_{g_\infty}\nonumber\\
&\leq&\sup|(u_\infty-f_\infty)\nabla(u_\infty-f_\infty)|\Vol(\partial V_i)
\end{eqnarray}
which tends to 0 as $i\rightarrow\infty$. Here we used that
$\triangle u_\infty=\triangle f_\infty$ by equations (\ref{e442})
and (\ref{e443}). The proof is now completed.
\end{proof}

We end the paper with several remarks.

\begin{remark}
By the arguments in \cite{Se}, one can show that the metric $d$
(respectively $g_\infty$) can be extended to be a family of metrics
$d_t$, $t\in(-\infty,\infty)$, with $d_0=d$ (respectively
$g_\infty(t)$ with $g_\infty(0)=g_\infty$) such that
$(M,g(t_k+t))\stackrel{d_{GH}}{\longrightarrow}(Y,d_t)$
(respectively
$\psi_k^*g(t_k+t)\stackrel{C^\infty}{\longrightarrow}g_\infty(t)$)
at each time $t$.
\end{remark}

\begin{remark}
It should be true that the limit $g_\infty(t)$ is independent of the choice of the sequence $t_k$.
\end{remark}

\begin{remark}
Let $(M,g(t))$ be a K\"ahler-Ricci flow without assumption
(\ref{e42}) in a prior. Then $(M,g(t))$ converges subsequentially to
a compact metric space $(Y,d)$. Suppose $N\subset Y$ is a smooth
subset on which $d$ is induced by a smooth metric $g_\infty$. If
$g(t)\stackrel{C^\infty}{\longrightarrow}g_\infty$, then $g_\infty$
satisfies a shrinking K\"ahler-Ricci soliton equation on $N$.
\end{remark}

\end{document}